\DeclareMathAlphabet{\mathpzc}{OT1}{pzc}{m}{it}
\newtheorem{thm}{Theorem}[section]
\newtheorem{lem}[thm]{Lemma}
\newtheorem{prop}[thm]{Proposition} 
\newtheorem{cor}[thm]{Corollary}
\newtheorem{rem}[thm]{Remark}
\newtheorem{definition}{Definition}[section]
\newcommand{\p}{\mathpzc{p}}
\newcommand{\q}{\mathpzc{q}}
\newcommand{\hgt}{\operatorname{ht}}
\newcommand{\LND}{\operatorname{LND}}
\newcounter{para}
\title{A note on homogeneous rank $2$ locally nilpotent derivations on $k[X,Y,Z]$}
\author{Parnashree Ghosh\\
 {\small{\it Theoretical Statistics and Mathematics  Unit, Indian Statistical Institute,}}\\ 
  {\small{\it 203 B.T.Road, Kolkata-700108, India}}\\
   {\small{\it e-mail : parnashree$\_$r@isical.ac.in, ghoshparnashree@gmail.com}}
 }
\begin{document}
\date{}
\maketitle

\abstract{In this article we show that for every prime number $p$, any irreducible homogeneous locally nilpotent derivations of rank $2$ and degree $p-2$ are triangularizable. Further, we describe the structure of irreducible non-triangularizable homogeneous locally nilpotent derivations of rank $2$ and degree $pq-2$, where $p,q$ are prime numbers. Consequently, we give explicit descriptions of the generators of the image ideals of certain homogeneous locally nilpotent derivations of rank $2$.}

\smallskip

 \noindent
 {\small {{\bf Keywords}. Polynomial Rings, Homogeneous Locally Nilpotent Derivations, Triangularizable derivations, Non-triangularizable derivations, Image ideals.}
 
 \noindent
 {\small {{\bf 2010 MSC}. Primary: 13N15, 13F20; Secondary: 14R20, 13A50.}}
 }

\section{Introduction}

Throughout this article $k$ denotes a field of characteristic zero.
 Let $R$ be an integral domain containing $k$ and $S$ a subring of $R$ containing $k$. 
 Then $\LND_S(R)$ denotes the set of all locally nilpotent derivations (LNDs) $D$ on $R$ such that $D|_{S}=0$. 
 By $R^{[n]}$ we denote a polynomial ring in $n (\geqslant 1)$ variables over $R$.
  For an LND $D \in \LND_R(R^{[n]})$, the rank of $D$, denoted by $rank(D)$, is defined as the smallest positive integer $r$ such that there exists a coordinate system $\{V_{1}, \cdots, V_{n}\}$ of $R^{[n]}$  for which $DV_i \neq 0$ for $1\leqslant i \leqslant r$, and $DV_i=0$ for $i>r$. By homogeneous LND $D$ on $B=k[X,Y,Z](=k^{[3]})$, we mean $D$ is homogeneous with respect to the standard grading $(1,1,1)$ on $B$ (see \thref{def}(vi)), and for any $f\in B$, $\deg(f)$ denotes its degree with respect to the above grading. 

\medskip
By a theorem of Rentschler (see \cite{ren}) it follows that there is no non-zero LND having full rank on $k^{[2]}$. 
For $n\geqslant 3$, Freudenburg has constructed examples of LNDs (homogeneous) of full rank on $k^{[n]}$ (see \cite[Sections 2 and Section 3]{GF}). 
However several mysteries about rank $2$ LNDs on $k^{[n]}$ $(n \geqslant 3)$, are still unsolved. 
In \cite{df}, Daigle and Freudenburg have done an extensive study on rank $2$ LNDs on $k^{[n]}$ for $n \geqslant 3$ and they have given the first example of an irreducible non-triangularizable LND of rank $2$ on $k^{[3]}$ (\cite[Example 4.3]{df}). 
Further, in \cite{dtr96} and \cite{dtr09}, Daigle has given two different characterizations of triangularizable LNDs on $k^{[3]}$. However, when the LNDs are homogeneous, from these two characterizations it is not clearly understood whether the degree of the LNDs have any connection to their triangularizability property.

\medskip 

In this article, our objective is to understand the structure of homogeneous non-triangularizable LNDs of rank $2$ on $k[X,Y,Z]$ and their kernels.
  We first observe that if an LND $D$ of rank $2$ on $k[X,Y,Z]$ is irreducible and homogeneous, then its degree (defined in section 2) plays a crucial role in determining whether $D$ is triangularizable.
   More precisely, we show that for a prime number $p$, every irreducible homogeneous LND of rank $2$ and degree $p-2$ is triangularizable (\thref{ctr}). Since $4$ is the smallest non-prime integer, the smallest possible degree of an irreducible homogeneous non-triangularizable LND can be 2 (see \thref{r1}). Note that $2$ is an integer of type $pq-2$, where $p,q$ are prime numbers.   
    
    \smallskip
    Our study on non-triangularizable LNDs is motivated by the above observations.  
    Over an algebraically closed field $k$, and for prime numbers $p$ and $q$, not necessarily distinct, we characterize irreduclible homogeneous non-triangularizable LNDs of rank $2$ and degree $pq-2$ on $k[X,Y,Z]$ in \thref{ntr}.
     In particular, we characterize irreducible homogeneous non-triangularizable LNDs of rank $2$ and of the smallest possible degree on $k[X,Y,Z]$.
     
     \medskip
     In a recent work, Khaddah, Kahoui and Ouali have shown that for a PID $R$, $R^{[2]}$ is a free module with a $D$-basis (see \thref{dbasis}) over $ker(D)$ for any locally nilpotent $R$-derivation $D$ on $R^{[2]}$ (see \cite{kah}).
     That means the image ideals (see \thref{def}(iv)) of any locally nilpotent $R$-derivation $D$ on $R^{[2]}$ are principal ideals and 
     in particular, the image ideals of every rank $2$ LND on $k[X,Y,Z]$ are principal.
     However, no study has been done to describe the generators of the image ideals.
     In section 4 of this article, we show that our results in section 3 can be applied to study the generators of the image ideals of certain LNDs of rank $2$. 
     To be specific, for a field $k$ and for an homogeneous LND $D$ on $k[X,Y,Z]$ of rank $2$, we have described the generator of the $n$-th image ideal $I_n:=D^n(k[X,Y,Z]) \cap ker(D)$ for every integer $n \geqslant 0$ where  $D$ is either triangularizable or irreducible non-triangularizable of degree $pq-2$, $p$ and $q$ being primes (see \thref{trf} and \thref{ntrf}). 
%
%
%
%
 
 
 \medskip
 
 In the next section we record some well known results, definitions and properties of LNDs.



\section{Preliminaries}

 We first recall some definitions and basic properties of locally nilpotent derivations (cf. \cite{GFB}). 
 
 \begin{definition}\thlabel{def}
 	Let $B$ be an integral domain containing $k$, $D$ a non-trivial locally nilpotent derivation on $B$, and $A=ker(D)$.
 	\begin{enumerate}
 		\item[\rm{(i)}] \rm{ An element $r \in B$ is called a {\it local slice} of $D$, if $Dr \in ker D \setminus \{0\}$.}
 		
 		\item[\rm(ii)] $D$ defines a degree function $\mu:=\deg_{D}$ on $B$ such that  $\deg_{D}(0)=-\infty$ and for every nonzero $b \in B$ 
 		$$
 		\mu(b) (= \deg_{D}(b))=max\{n \in \mathbb{N}\,\,\,| D^n(b) \neq 0\}.
 		$$
 		
 		\item[\rm{(iii)}] \rm{Let $\mu$ be the degree function on $B$ induced by $D$. For a non-negative integer $n$, we define the {\it $n$-th degree $A$-module} with respect to $\mu$, as follows : 
 			$$	
 			\mathscr{F}_{n}=\{b \in B \mid \mu(b) \leqslant n\}.
 			$$ }  
 		
 		\item[\rm{(iv)}]  \rm{For a non-negative integer $n$, the {\it $n$-th image ideal} of $D$ is defined as $I_n:=D^nB \cap A$. }
 		
 		\item[\rm{(v)}] 	\rm{$D$ is said to be {\it irreducible} if the ideal $\left(DB\right)$ is not contained in any proper principal ideal of $B$. }
 		
%
 		\item[\rm{(vi)}] \rm{Let $G$ be a totally ordered Abelian group and $B$ a $G$-graded ring such that $B=\bigoplus_{i \in G}B_i$ be the $G$-graded structure on $B$. Then $D$ is said to a {\it homogeneous derivation} on $B$, if there exists some $d \in G$, such that $DB_i \subseteq B_{i+d}$ for every $i \in G$, and $\deg_G(D):=d$ is said to be the {\it degree} of $D$. If $G=\mathbb{Z}$, then $\deg_{\mathbb{Z}}(D)$ will be denoted by $\deg(D)$.} 
 		
 		\item[\rm{(vii)}]	\rm{For $B=k^{[n]}$, $D$ is said to be {\it triangularizable} if there exists a system of coordinates $\{X_1,\ldots,X_n\}$ of $B$ such that $DX_1 \in k$ and $DX_i \in k[X_1,\ldots,X_{i-1}]$, for every $i \geqslant 2$.} 
 		
 		\item[\rm{(viii)}] \rm{Let $B=k[X_1,\ldots,X_n]$ and $f_1,\ldots,f_{n-1} \in B$. For $\underline{f}:=(f_1,\ldots,f_{n-1})$, the {\it Jacobian derivation} $\Delta_{\underline{f}}$ on $B$ is defined as follows: 
 			$$
 			\Delta_{\underline{f}}(g):=\frac{\partial(f_1,\ldots,f_{n-1},g)}{\partial(X_1,\ldots,X_n)},
 			$$
 			for every $g \in B$.}
 		
 		\item[\rm{(ix)}] \rm{ A collection $\{B_{n}\,\,| \, n \in \mathbb{Z}\}$ of $k$-subspaces of $B$ is said to be a {\it proper $\mathbb{Z}$-filtration} if
 			
 			\begin{itemize}
 				\item [\rm(a)] $B_{n} \subseteq B_{n+1}$ for every $n \in \mathbb{Z}$.
 				
 				\item[\rm(b)]  $B= \bigcup_{n \in \mathbb{Z}} B_{n}$.
 				
 				\item[\rm(c)] $\bigcap_{n \in \mathbb{Z}} B_{n}=\{0\}$.
 				
 				\item[\rm(d)]  $(B_{n} \setminus B_{n-1}).(B_{m}\setminus B_{m-1}) \subseteq B_{m+n} \setminus B_{m+n-1}$ for all $m,n \in \mathbb{Z}$.
 		\end{itemize}} 
 	\end{enumerate}
 	
 \end{definition}
 
 	\begin{lem}\thlabel{prop}
 	Let $B$ be an integral domain containing $k$, $D$ a non-trivial locally nilpotent derivation on $B$, and $A=ker(D)$. Then the following statements hold:
 	
 	\begin{itemize}
 		
 		\item[\rm(i)] $A$ is a factorially closed subring of $B$ and hence algebraically closed. 
 		
 		\item[\rm(ii)] For an element $r \in B \setminus A$ such that $D^2r=0$, we have $B_{Dr}=A_{Dr}[r]=A_{Dr}^{[1]}$. 
 		
 		
 		
 		\item[\rm{(iii)}]  Let $S$ be a multiplicatively closed subset of $A\setminus \{0\}$. Then $D$ will induce a locally nilpotent derivation $S^{-1}D$ on $S^{-1}B$ and $ker(S^{-1}D)=S^{-1}A$. 
 		
 		\item[\rm{(iv)}] Let $\overline{k}$ be an algebraic closure of $k$ and $\overline{D}$ denotes its natural extension to $\overline{B}:=B \otimes_{k} \overline{k}$. Then $\overline{D} \in \LND(\overline{B})$ and $ker(\overline{D})= A \otimes_{k} \overline{k}$.
 	\end{itemize}

 \end{lem}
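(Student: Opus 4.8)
The plan is to reduce all four statements to two elementary facts about a nonzero locally nilpotent derivation $D$ on a domain in characteristic zero: the Leibniz rule, and its consequence that the degree function $\mu=\deg_D$ is additive, $\mu(fg)=\mu(f)+\mu(g)$ for all nonzero $f,g\in B$. I would establish additivity first, since it is the one computation with real content. With $n=\mu(f)$ and $m=\mu(g)$, expanding $D^{n+m}(fg)$ by Leibniz leaves only the term $\binom{n+m}{n}D^{n}(f)\,D^{m}(g)$, because every other term carries a factor $D^{i}(f)$ or $D^{j}(g)$ with $i>n$ or $j>m$; as $\operatorname{char}k=0$ the binomial coefficient is a unit, and $D^{n}(f)D^{m}(g)\neq0$ since $A\subseteq B$ is a domain, so $D^{n+m}(fg)\neq0$, while $D^{n+m+1}(fg)=0$ for the same reason.

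For (i): if $fg\in A\setminus\{0\}$ then $0=\mu(fg)=\mu(f)+\mu(g)$ forces $\mu(f)=\mu(g)=0$, i.e. $f,g\in A$; together with $1\in A$ this shows $A$ is a factorially closed subring. For algebraic closedness, let $b\in B$ be algebraic over $\operatorname{Frac}(A)$, hence a root of some $p(T)=\sum_{i=0}^{n}a_{i}T^{i}\in A[T]$ with $a_{n}\neq0$ and $n$ minimal; applying $D$ and using $a_{i}\in A$ gives $0=D(p(b))=p'(b)\,Db$. If $Db\neq0$ then $p'(b)=0$, and since $p'\neq0$ (char $0$, $na_{n}\neq0$) of degree $n-1<n$, this contradicts minimality; hence $Db=0$, i.e. $b\in A$.

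For (iii): set $S^{-1}D(b/t):=Db/t$ for $b\in B$, $t\in S$; this is well defined and a derivation exactly because $Dt=0$ for $t\in S\subseteq A$, and it is locally nilpotent since $(S^{-1}D)^{m}(b/t)=D^{m}(b)/t$. The inclusion $S^{-1}A\subseteq\ker(S^{-1}D)$ is clear, and conversely $Db/t=0$ in $S^{-1}B$ means $t'Db=0$ in $B$ for some $t'\in S$, whence $Db=0$ as $B$ is a domain, so $b/t\in S^{-1}A$. For (ii), apply this with $s:=Dr$ and $S=\{s^{i}\}_{i\geqslant0}$: here $s\in A\setminus\{0\}$ because $r\notin A$ and $D^{2}r=0$, so $B_{s}$ carries the LND $S^{-1}D$ with kernel $A_{s}$. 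A short induction on $j$ gives $D^{j}(r^{j})=j!\,s^{j}$, so for $b\in B_{s}$ with $\mu(b)=n$ the element $b-\frac{D^{n}(b)}{n!\,s^{n}}r^{n}$, whose coefficient lies in $A_{s}$ since $s$ is a unit, is killed by $D^{n}$ and hence has $\mu<n$; induction on $\mu$ then yields $B_{s}=A_{s}[r]$. Finally $r$ is transcendental over $A_{s}$: from a relation $\sum_{i\leqslant n}a_{i}r^{i}=0$ with $a_{i}\in A_{s}$, $a_{n}\neq0$, $n\geqslant1$, applying $D^{n}$ gives $0=a_{n}D^{n}(r^{n})=a_{n}n!\,s^{n}\neq0$ (alternatively invoke that $A_{s}$ is algebraically closed in $B_{s}$ by (i)). Hence $B_{Dr}=A_{Dr}[r]=A_{Dr}^{[1]}$.

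For (iv): write $\overline{k}$ as the directed union of its finite subextensions $k'/k$, so that $\overline{B}$ is the directed union of the subrings $B_{k'}:=B\otimes_{k}k'$, and it suffices to analyse each $B_{k'}$. A $k$-basis $\{e_{l}\}$ of $k'$ realises $B_{k'}$ as a free $B$-module $\bigoplus_{l}B\,(1\otimes e_{l})$ on which $\overline{D}$ acts by $\overline{D}\bigl(\sum_{l}c_{l}(1\otimes e_{l})\bigr)=\sum_{l}(Dc_{l})(1\otimes e_{l})$, since $\overline{D}(1\otimes e_{l})=0$; this is manifestly locally nilpotent, and it vanishes iff every $Dc_{l}=0$, i.e. iff every $c_{l}\in A$, so $\ker(\overline{D})\cap B_{k'}=A\otimes_{k}k'$. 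Passing to the union over all $k'$ gives $\overline{D}\in\LND(\overline{B})$ and $\ker(\overline{D})=A\otimes_{k}\overline{k}$. None of these steps is a real obstacle — they are textbook facts; the only computation with genuine content is the additivity of $\mu$, which is where $\operatorname{char}k=0$ enters, so that is the step I would pin down first. A minor caveat in (iv) is that writing $\LND(\overline{B})$ presupposes $\overline{B}$ to be a domain, which holds automatically in the polynomial-ring settings where the lemma is applied.
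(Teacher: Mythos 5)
The paper gives no proof of this lemma: it is stated as a recollection of standard facts with a citation to Freudenburg's book \cite{GFB}, so there is nothing internal to compare against. Your argument is correct and is essentially the standard one found in that reference — additivity of $\deg_D$ via the Leibniz expansion, the local-slice computation $D^j(r^j)=j!\,(Dr)^j$ for (ii), and base change along $k\subseteq\overline{k}$ for (iv) — including your (appropriate) caveat that $\overline{B}$ must be a domain for (iv) to be stated as written.
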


 Next we recall some known results. The first one is the  Rentschler Theorem (\cite{ren}).
\begin{thm}\thlabel{ren}
	Let $D$ be a non-zero locally nilpotent derivation on $k[X,Y]$. Then there exist $p(X) \in k[X]$ and a tame automorphism $\sigma$ of $k[X,Y]$ such that $\sigma D \sigma^{-1}= p(X)\frac{\partial}{\partial Y}$.
\end{thm}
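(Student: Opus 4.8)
The plan is to deduce the theorem from the structure of $A := ker(D)$. Since $D \neq 0$ is locally nilpotent on $B = k[X,Y]$, I first fix a local slice $r$ of $D$ and put $s := Dr \in A \setminus \{0\}$; by \thref{prop}(ii), $B_s = A_s[r] = A_s^{[1]}$. In particular $\td_k \operatorname{Frac}(A) = 1$, so $A \neq k$, and, since $A$ is factorially closed in the UFD $B$ by \thref{prop}(i), one checks $A = \operatorname{Frac}(A) \cap B$, so $A$ is a finitely generated $k$-algebra by Zariski's finiteness theorem, it is itself a UFD, and $A^{*} = A \cap B^{*} = k^{*}$. Moreover $B \otimes_{A} \operatorname{Frac}(A) = A_s[r] \otimes_{A_s} \operatorname{Frac}(A) = \operatorname{Frac}(A)[r]$, which is $\operatorname{Frac}(A)^{[1]}$; hence $\operatorname{Frac}(A)$ is a unirational field of transcendence degree one over $k$ which, via a $k$-point of $\A^2$ in the open locus where the map is an $\A^1$-bundle, carries a $k$-rational place. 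A finitely generated normal $k$-algebra of dimension one with trivial units and rational, $k$-pointed fraction field is $k^{[1]}$; thus $A = k[f]$ with $k[f] = k^{[1]}$. Note also that $f$ is irreducible in $B$, since a proper factorisation of $f$ would, by factorial closedness, produce proper factors inside $k[f]$.

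The essential point is then that $f$ is a \emph{coordinate} of $B$. From $B \otimes_{k[f]} \operatorname{Frac}(k[f]) = \operatorname{Frac}(k[f])[r]$ we see that the generic fibre of the morphism $f \colon \A^2 \to \A^1$ is an affine line over $k(f)$; by the Abhyankar--Moh--Suzuki theorem (equivalently, by the Miyanishi--Sugie theory of $\A^1$-fibrations on the plane), a polynomial in $k[X,Y]$ whose generic fibre is an affine line is a coordinate. So $B = k[f,g] = k[f]^{[1]}$ for some $g \in B$. Now $Df = 0$ and $Dg \in A = k[f]$, say $Dg = p(f)$ with $p \in k[T]$. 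Let $\tau \in \operatorname{Aut}_k(B)$ be the automorphism with $\tau(X) = f$ and $\tau(Y) = g$; then $\tau^{-1} D \tau$ annihilates $X$ and sends $Y$ to $p(X)$, i.e. $\tau^{-1} D \tau = p(X) \frac{\partial}{\partial Y}$. Finally, every automorphism of $k^{[2]}$ is tame by the Jung--van der Kulk theorem, so $\sigma := \tau^{-1}$ is tame and $\sigma D \sigma^{-1} = p(X) \frac{\partial}{\partial Y}$, as required.

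An alternative, more self-contained route is Rentschler's original one via weighted degrees: fix a positive weight $\omega \in \mathbb{Z}_{>0}^{2}$ and pass to the associated graded derivation $\bar D$ for the corresponding proper $\mathbb{Z}$-filtration (\thref{def}(ix)); then $\bar D$ is a homogeneous locally nilpotent derivation on $k[X,Y]$, and such derivations are, up to a nonzero scalar and an elementary (triangular) automorphism, of the form $X^{m}\frac{\partial}{\partial Y}$ — the only real input beyond \thref{prop} being that an affine plane curve of positive genus is not isomorphic to $\A^1$. Conjugating $D$ by a lift of that elementary automorphism and iterating with suitably chosen weights yields a Newton-polygon descent that terminates with $DX$ constant, so that $X$ sits in the kernel. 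In either approach, the step carrying the real content — and the one I expect to be the main obstacle in a from-scratch proof — is the passage from ``$A = k[f]$'' to ``$f$ is a coordinate'' (the Abhyankar--Moh--Suzuki epimorphism theorem in the first route, the classification of homogeneous LNDs on $k^{[2]}$ in the second); the remaining manipulations with automorphisms and the appeal to Jung--van der Kulk tameness are routine.
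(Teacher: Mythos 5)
The paper does not prove this statement: Theorem~3 (\thref{ren}) is recalled as a known classical result and attributed directly to Rentschler's paper \cite{ren}, so there is no in-paper argument to compare yours against. Judged on its own, your first route is a correct outline of the standard modern proof: the chain ``local slice $\Rightarrow$ $B_s=A_s[r]$ $\Rightarrow$ $A$ is a one-dimensional, normal, factorially closed, finitely generated $k$-subalgebra with $A^*=k^*$ and rational fraction field $\Rightarrow$ $A=k[f]$ $\Rightarrow$ $f$ is a coordinate $\Rightarrow$ conjugate and apply Jung--van der Kulk'' is exactly how this theorem is usually derived today (e.g.\ in Freudenburg's book), and you correctly identify the one step with real content, namely that $f$ is a variable. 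The only place I would ask for slightly more care is precisely there: the Abhyankar--Moh--Suzuki epimorphism theorem is a statement about a \emph{closed} fibre, so rather than arguing from the generic fibre you should specialize --- pick $c\in k$ with $s\notin (f-c)$ (possible since $k$ is infinite), note that $k[X,Y]/(f-c)\cong k^{[1]}$ because the fibration is trivial over $D(s)$, and then conclude $f-c$ (hence $f$) is a coordinate; as written, ``a polynomial whose generic fibre is an affine line is a coordinate'' is true but is itself a theorem needing justification. Your alternative sketch via weighted filtrations and Newton-polygon descent is Rentschler's original strategy and is also viable, though as presented it is only a plan. In short: no gap in substance, but the proof is an assembly of heavy external inputs (Zariski finiteness, L\"uroth, AMS, Jung--van der Kulk) rather than a self-contained argument, which is consistent with the paper's choice to simply cite \cite{ren}.
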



Next we quote an important result of Miyanishi (\cite{miya}).

\begin{thm}\thlabel{mthm}
	Let $D$ be a non-zero locally nilpotent derivation on $k[X,Y,Z]$. Then $ker(D)=k^{[2]}$.
\end{thm}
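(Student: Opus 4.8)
The plan is to realize $A:=\ker(D)$ as the coordinate ring of a smooth affine surface with trivial Picard group, only constant units and logarithmic Kodaira dimension $-\infty$, and then to apply the algebraic characterization of the affine plane. First I would reduce to the case $k=\overline{k}$: by \thref{prop}(iv) the extension $\overline{D}$ of $D$ to $\overline{B}=B\otimes_{k}\overline{k}$ is locally nilpotent with $\ker(\overline{D})=A\otimes_{k}\overline{k}$, so it suffices to prove $\ker(\overline{D})=\overline{k}^{[2]}$ and then descend, since a $k$-form of $\mathbb{A}^{2}$ over a field of characteristic zero is trivial (Kambayashi). Assuming now $k$ algebraically closed, \thref{prop}(i) gives that $A$ is factorially closed in the UFD $B=k[X,Y,Z]$, hence $A$ is itself a UFD, is normal, and has $A^{*}=B^{*}=k^{*}$; thus $X:=\Spec A$ satisfies $\operatorname{Pic}(X)=\operatorname{Cl}(A)=0$ and $\mathcal{O}(X)^{*}=k^{*}$. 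Choosing a local slice $r$ of $D$ (which exists for any nonzero LND), so $Dr=a\in A\setminus\{0\}$ and $D^{2}r=0$, \thref{prop}(ii) gives $B_{a}=A_{a}[r]=A_{a}^{[1]}$; in particular $\operatorname{Frac}(B)=\operatorname{Frac}(A)(r)$, so $\td_{k}\operatorname{Frac}(A)=2$ and $X$ is an affine surface, and $A=B\cap\operatorname{Frac}(A)$ is a finitely generated $k$-algebra by Zariski's finiteness theorem (the transcendence degree over $k$ of the subfield $\operatorname{Frac}(A)$ of $\operatorname{Frac}(B)$ being $2$).

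Since $B_{a}=A_{a}[r]=A_{a}^{[1]}$ is regular, so is $A_{a}$, i.e. $X\setminus V(a)$ is smooth; and as $A$ is normal the singular locus of $X$ is in any case a finite set of closed points. The inclusion $A\hookrightarrow B$ induces a dominant morphism $\varphi\colon\mathbb{A}^{3}=\Spec B\to X$ which, over $X\setminus V(a)$, is the trivial bundle $\Spec A_{a}\times\mathbb{A}^{1}\to\Spec A_{a}$; hence $\varphi$ is an $\mathbb{A}^{1}$-fibration, and since $\overline{\kappa}(\mathbb{A}^{3})=-\infty$ this forces $\overline{\kappa}(X)=-\infty$. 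The remaining step---which I expect to be the main obstacle---is to show that $X$ has no singular points at all; this is the technical heart of Miyanishi's theorem, and the natural route is to analyse the degenerate fibres of the $\mathbb{A}^{1}$-fibration $\varphi$, using smoothness of its total space $\mathbb{A}^{3}$ together with the structure of the $\mathbb{G}_{a}$-action $\exp(tD)$ whose algebraic quotient is $X$.

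Granting smoothness, $X$ is a smooth affine surface over an algebraically closed field of characteristic zero with $\operatorname{Pic}(X)=0$, $\mathcal{O}(X)^{*}=k^{*}$ and $\overline{\kappa}(X)=-\infty$, and it is rational because $\operatorname{Frac}(B)=\operatorname{Frac}(A)(r)$ is a purely transcendental extension of $k$ and cancellation of a variable holds for fields of transcendence degree $2$ in characteristic zero. By the algebraic characterization of the affine plane (Miyanishi--Sugie, Fujita) we conclude $X\cong\mathbb{A}^{2}$, that is $A=k^{[2]}$; together with the descent from the first step, this gives $\ker(D)=k^{[2]}$.
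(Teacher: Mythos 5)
This statement is Miyanishi's theorem, which the paper does not prove: it is quoted verbatim from \cite{miya} as a known result, so there is no internal proof to compare your argument against. Judged on its own terms, your proposal is a correct \emph{outline} of the standard proof strategy (reduce to $\overline{k}$ via \thref{prop}(iv) and Kambayashi, establish that $A$ is a two-dimensional normal affine UFD with units $k^{*}$ using factorial closedness and Zariski's finiteness theorem, then invoke the Miyanishi--Sugie--Fujita characterization of $\mathbb{A}^{2}$), but it is not a proof. You explicitly defer the smoothness of $\Spec A$ to ``the natural route'' of analysing degenerate fibres, while acknowledging it is the technical heart of the theorem. That is precisely the step where all the work lies: normality only confines the singularities to finitely many points, and ruling those points out requires a genuine analysis of the quotient morphism (or, in other treatments, of the plinth ideal / local triviality of the $\mathbb{G}_a$-action in codimension one). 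As written, the proposal has a declared gap at its central step.

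A second, smaller problem is the logarithmic Kodaira dimension step. The inference ``$\overline{\kappa}(\mathbb{A}^{3})=-\infty$ forces $\overline{\kappa}(X)=-\infty$'' is not valid as stated: log Kodaira dimension can only \emph{increase} when passing to open subsets, and a dominant morphism with positive-dimensional fibres does not by itself bound $\overline{\kappa}$ of the target from above (easy addition bounds $\overline{\kappa}$ of the \emph{source}). The standard fix is either to exhibit a cylinderlike open set ($\Spec A_{a}\times\mathbb{A}^{1}$ open in $\mathbb{A}^{3}$ and the Miyanishi--Sugie criterion), or to cut $\mathbb{A}^{3}$ by a general hyperplane to obtain a generically finite dominant morphism $\mathbb{A}^{2}\to X$ and use that $\overline{\kappa}$ does not increase under such maps. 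The conclusion is correct, but the justification you give does not yield it. Since the paper treats this theorem as an external input, the honest course here is to cite Miyanishi (or Freudenburg's book) rather than to reprove it.
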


The following theorem is due to Zurkowski (\cite{zur}).

\begin{thm}\thlabel{zu}
	Let $D$ be a nonzero homogeneous locally nilpotent derivation with respect to some positive grading $\omega$ on $k[X,Y,Z]$ and $A=ker(D)$. Then there exist homogeneous polynomials $F, G$ with respect to that grading such that $A=k[F,G]$.
\end{thm}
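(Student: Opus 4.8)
The plan is to obtain the homogeneous generators directly from Miyanishi's theorem \thref{mthm} together with standard facts about $\mathbb{N}$-graded algebras. Write $\omega=(\omega_1,\omega_2,\omega_3)$ with each $\omega_i$ a positive integer, so that $B=k[X,Y,Z]$ is $\mathbb{N}$-graded with $B_0=k$, and let $d=\deg_\omega(D)$. First I would check that $A=ker(D)$ is a graded subring of $B$: if $a\in A$ has $\omega$-homogeneous decomposition $a=\sum_i a_i$, then $Da=\sum_i Da_i$ with each $Da_i$ homogeneous of degree $i+d$, and since the degrees $i+d$ are pairwise distinct, $Da=0$ forces $Da_i=0$ for every $i$. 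Hence $A=\bigoplus_{i\ge 0}A_i$ with $A_i=A\cap B_i$ and $A_0=A\cap k=k$. Let $A_+=\bigoplus_{i>0}A_i$ denote the irrelevant maximal ideal.

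The crux is to show that $A$ admits a homogeneous generating set of size $2$, for which it suffices (by graded Nakayama) to prove $\dim_k(A_+/A_+^2)=2$. Here I would use the elementary observation that the minimal number of $k$-algebra generators of $A$, taken over all --- not necessarily homogeneous --- generating sets, is exactly $\dim_k(A_+/A_+^2)$: reducing modulo $A_+^2$ gives $A/A_+^2=k\oplus(A_+/A_+^2)$ with the second summand of square zero, so a family $\{g_1,\dots,g_n\}$ mapping to $c_j+v_j$ (with $c_j\in k$, $v_j\in A_+/A_+^2$) generates a subalgebra equal to $k\oplus\operatorname{span}_k(v_1,\dots,v_n)$, which is all of $A/A_+^2$ precisely when the $v_j$ span $A_+/A_+^2$. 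Now by \thref{mthm}, $A\cong k^{[2]}$ as a $k$-algebra, so $A$ is generated by $2$ elements and $\dim A=2$; since a $k$-algebra generated by at most one element is a quotient of $k^{[1]}$ and hence of dimension $\le 1$, the minimal number of generators is exactly $2$, i.e.\ $\dim_k(A_+/A_+^2)=2$.

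It then remains to pick $\omega$-homogeneous $F,G\in A_+$ whose images form a $k$-basis of $A_+/A_+^2$; by graded Nakayama these generate $A$, so $A=k[F,G]$, and since $\td_k A=2$ they are automatically algebraically independent. I do not expect a genuine obstacle in this argument; the only step needing a little care is the identification of the embedding dimension $\dim_k(A_+/A_+^2)$ with the minimal number of arbitrary $k$-algebra generators, which is routine after reduction modulo $A_+^2$. Alternatively, one could bypass \thref{mthm} and invoke the classical linearizability of $\mathbb{G}_m$-actions on $\mathbb{A}^2$, which produces homogeneous coordinates on $\Spec A$ at once; but the route through \thref{mthm} is the most economical given what is already available.
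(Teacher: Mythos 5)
The paper does not actually prove this statement: it is quoted as a known theorem of Zurkowski with a reference to \cite{zur}, so there is no in-house argument to compare yours against. On its own merits, your proof is correct and self-contained modulo \thref{mthm}. The reduction to graded pieces is right (homogeneity of $D$ sends distinct $\omega$-degrees to distinct degrees, so $A$ is a graded subring, and $A_0=k$ because the grading is positive); the identification of the minimal number of arbitrary $k$-algebra generators of $A$ with $\dim_k(A_+/A_+^2)$ via the square-zero decomposition $A/A_+^2=k\oplus(A_+/A_+^2)$ is sound; Miyanishi's theorem pins that number at $2$, since $k^{[2]}$ is $2$-generated and a singly generated algebra is a quotient of $k^{[1]}$, hence of dimension at most $1$; and graded Nakayama then yields homogeneous $F,G$ generating $A$, with algebraic independence forced by $\operatorname{tr.deg}_k A=2$. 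This is essentially the standard derivation of the homogeneous Miyanishi--Zurkowski statement, so while it differs from the paper only in that the paper offers no proof at all, it is a legitimate and economical reconstruction; the only point worth flagging is that it relies on \thref{mthm}, which the paper also treats as a black box, so nothing circular occurs.
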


Next we record two easy lemmas.

\begin{lem}\thlabel{htr}
	Let $D$ be a homogeneous triangularizable locally nilpotent derivation of degree $d$ on $k[U,V,W]$ with respect to the standard weights $(1,1,1)$. Then there exists a linear system of variables $\{X,Y,Z\}$ with respect to which $D$ is triangular.  
\end{lem}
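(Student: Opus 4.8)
The statement to prove is \thref{htr}: a homogeneous triangularizable LND $D$ of degree $d$ on $k[U,V,W]$ (standard weights $(1,1,1)$) is triangular in \emph{some linear} coordinate system. The natural approach is to start from a triangulating coordinate system $\{X_1,X_2,X_3\}$, which by definition satisfies $DX_1\in k$, $DX_2\in k[X_1]$, $DX_3\in k[X_1,X_2]$, and then extract from it a \emph{linear} triangulating system by taking homogeneous components. The key observation is that $D$ being homogeneous of degree $d$ means $D$ carries the degree-$m$ part of $B$ into the degree-$(m+d)$ part; writing each $X_i = \sum_j (X_i)_j$ as a sum of homogeneous pieces and comparing degrees in the relations $DX_i \in k[X_1,\dots,X_{i-1}]$ should force the \emph{linear} parts $L_i := (X_i)_1$ to again triangulate $D$.

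First I would reduce to the case $d \geq -1$ (if $DX_1 \in k\setminus\{0\}$ then $d=-1$, and if all $DX_i$ are non-constant then $d\geq 0$; $D\neq 0$ rules out smaller $d$). Then I would argue that $\{L_1,L_2,L_3\}$ is a genuine coordinate system: since $\{X_1,X_2,X_3\}$ is a coordinate system, the Jacobian of $(X_1,X_2,X_3)$ is a nonzero constant, and extracting the degree-$0$ part of the Jacobian matrix (equivalently, the linear parts) shows the Jacobian of $(L_1,L_2,L_3)$ is that same nonzero constant; a linear endomorphism of $k^3$ with nonzero Jacobian is invertible, so $\{L_1,L_2,L_3\}$ is a linear coordinate system. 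Next, I would show $\{L_1,L_2,L_3\}$ triangulates $D$: apply $D$ to $X_1=\sum_j (X_1)_j$ and use homogeneity of $D$ to see that $DX_1\in k$ forces $D$ to kill $(X_1)_j$ for all $j$ with $j+d\neq 0$ and, when $d=-1$, to send $(X_1)_1=L_1$ to a constant (the only way to land in $k=B_0$ is from $B_1$), while $D(X_1)_j=0$ for $j\neq 1$; hence $DL_1 \in k$. Similarly, from $DX_2 \in k[X_1]$ one extracts, degree by degree, that $DL_2$ lies in the subalgebra generated by the homogeneous parts of $X_1$ that $D$ does not kill, and a short argument (again using that $D$ raises degree by exactly $d$) pins this down to $DL_2 \in k[L_1]$; likewise $DL_3 \in k[L_1,L_2]$.

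The main obstacle — and the step deserving real care — is the bookkeeping in the last paragraph: showing that passing to linear parts is compatible with the triangular \emph{shape} of the relations, i.e.\ that $D L_i \in k[L_1,\dots,L_{i-1}]$ rather than merely $DL_i \in k[X_1,\dots,X_{i-1}]$ or into some larger algebra. One clean way is to introduce the $\mathbb{Z}$-filtration of $B$ by degree and observe that the associated graded of $D$ with respect to a triangulating system is still triangular with respect to the images of $X_1,X_2,X_3$; but since $D$ is already homogeneous, $D$ essentially \emph{is} its own associated graded, and the homogeneous components of the $X_i$ inherit the triangular relations. An alternative, perhaps more transparent, is to induct on the number of variables and use Rentschler's theorem (\thref{ren}) together with the structure of $A=ker(D)$: by \thref{zu}, $A=k[F,G]$ with $F,G$ homogeneous, and one tracks how the slice and kernel generators interact with the grading. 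I would write it using the homogeneous-components argument, as it is the most elementary and keeps everything inside $B$ without localizing.
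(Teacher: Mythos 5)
Your proposal follows the same route as the paper's own proof: take the linear parts $L_i$ of a triangulating system $\{U_1,U_2,U_3\}$ and use the homogeneity of $D$ to conclude that $DL_1=0$, $DL_2\in k[L_1]$ and $DL_3\in k[L_1,L_2]$, so that $\{L_1,L_2,L_3\}$ is a linear triangulating system. The extra details you flag --- that the $L_i$ form a coordinate system (constant term of the Jacobian) and the degree bookkeeping for the triangular shape (the point being that $DU_i$ has no homogeneous component of degree $\leqslant d$, which forces the Taylor expansions of $f$ and $g$ at the constant terms of the $U_j$ to start in total degree $d+1$, so only the linear parts survive in degree $d+1$) --- are exactly what the paper's one-line justification suppresses, and your plan is correct.
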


\begin{proof}
	Since $D$ is triangularizable, there exists a system of variables $\{U_1,U_2,U_3\}$ such that 
	\begin{equation}\label{u}
		DU_1=0,\, DU_2=f(U_1),\, DU_3=g(U_1,U_2),
	\end{equation}
	for some $f \in k^{[1]}$ and $g \in k^{[2]}$. Suppose $L_i$ denotes the linear part of $U_i$ for every $i, 1 \leqslant i \leqslant 3$. Since $D$ is homogeneous of degree $d$, from \eqref{u}, it follows that 
	$$
		DL_1=0,\, DL_2=\lambda L_1^{d+1},\, DL_3=\widetilde{g}(L_1,L_2),
	$$
	for some $\lambda \in k^*$ and homogeneous polynomial $\widetilde{g}$ of degree $d+1$. Therefore, the assertion holds for $\{X,Y,Z\}=\{L_1,L_2,L_3\}$.
	\end{proof}

\begin{lem}\thlabel{irr}
		Let $B$ be an affine $k$-algebra which is a UFD and $D$ a non-trivial locally nilpotent derivation on $B$. 
	Let $\overline{k}$ be an algebraic closure of 
	$k$ and $\overline{D}$ 
	denotes the natural extension of $D$ on $\overline{B}$, where $\overline{B}=B \otimes_{k} \overline{k}$.
	If $D$ is irreducible then so is $\overline{D}$.
\end{lem}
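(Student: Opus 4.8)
The plan is to argue by contradiction: suppose $\overline{D}$ is not irreducible, so that $(\overline{D}\,\overline{B}) \subseteq (h)$ for some non-unit $h \in \overline{B}$, and then descend this divisibility to $B$ to contradict the irreducibility of $D$. The main input will be \thref{prop}(iv), which tells us $\overline{B} = B \otimes_k \overline{k}$ is again a UFD (being a localization-free integral extension... more precisely, since $B$ is an affine UFD and $\overline{k}/k$ is algebraic, $\overline{B}$ is a direct limit of finite free $B$-algebras, and in the situation at hand one knows $\overline{B}$ is a UFD) with $\overline{D} \in \LND(\overline{B})$ and $ker(\overline{D}) = A \otimes_k \overline{k}$, where $A = ker(D)$.

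First I would fix a $k$-basis $\{e_\lambda\}_{\lambda \in \Lambda}$ of $\overline{k}$ with $e_{\lambda_0} = 1$ for some distinguished index $\lambda_0$, so that every element of $\overline{B}$ is uniquely a finite $B$-linear combination $\sum_\lambda b_\lambda e_\lambda$; the action of $\overline{D}$ is then "diagonal" in this basis, $\overline{D}(\sum b_\lambda e_\lambda) = \sum (Db_\lambda) e_\lambda$. Write $h = \sum_\lambda h_\lambda e_\lambda$ with $h_\lambda \in B$. For each variable's worth of generators — say $B = k[x_1,\dots,x_m]/I$, or just take a finite generating set $x_1,\dots,x_m$ of $B$ as a $k$-algebra — we have $h \mid \overline{D}x_i$ in $\overline{B}$ for all $i$, and since the $Dx_i \in B$ generate the ideal $(DB)$, in fact $h$ divides every element of the extended ideal $(DB)\overline{B}$. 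The goal is to produce a non-unit divisor of $(DB)$ inside $B$ itself.

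The key step — and the main obstacle — is to replace $h$ by an associate lying in $B$. Here is where I would use that $A$ is the kernel of $D$ together with the structure of $(DB)$: pick any nonzero $a = Dr \in (DB) \cap A$ (a local slice times its image), so $h \mid a$ in $\overline{B}$. Factor $a$ into irreducibles in the UFD $B$; then $h$, up to a unit of $\overline{B}$, is a product of a subset of the images in $\overline{B}$ of these irreducible factors of $a$ — but wait, irreducibles of $B$ need not stay irreducible in $\overline{B}$. The cleaner route: let $h_0$ be the gcd in $B$ of the elements $\{Db : b \in B\}$ (equivalently a generator of the largest principal ideal of $B$ containing $(DB)$; this exists since $B$ is a UFD and $(DB) \subseteq (a)$ is contained in a proper principal ideal's complement — actually $(DB)$ is contained in some maximal principal ideal iff $D$ is reducible). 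Since $D$ is irreducible, $h_0$ is a unit. I claim $h \mid h_0$ up to units in $\overline{B}$, which forces $h$ to be a unit of $\overline{B}$, the desired contradiction. To see the claim: $h$ divides $Db$ in $\overline{B}$ for every $b \in B$; writing the content/gcd argument coefficient-wise in the basis $\{e_\lambda\}$ and using that $\overline{k}$ is a flat (indeed faithfully flat) $k$-module, one shows $\gcd_{\overline{B}}\{\,\overline{D}\,\overline{b} : \overline{b}\in\overline{B}\,\} = \gcd_{\overline{B}}\{Db : b \in B\} = h_0 \overline{B}$ as ideals of $\overline{B}$; hence $(h) \supseteq h_0 \overline{B} = \overline{B}$, so $h$ is a unit. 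The delicate point deserving care is precisely this gcd-base-change claim: that taking gcd of a family of elements of $B$ commutes with the faithfully flat extension $B \to \overline{B}$, which I would prove by reducing to a finite subextension $k \subseteq k' \subseteq \overline{k}$ with $[k':k] < \infty$, where $B \to B' := B \otimes_k k'$ is finite free, and invoking that for such extensions of UFDs an element of $B$ that is "primitive" (gcd a unit) in $B$ stays primitive in $B'$ — this last fact following from a standard Gauss-lemma/norm argument, or directly from $B'/hB'$ being torsion-free over $B/\text{(stuff)}$ when $h$ is primitive.

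Alternatively, and perhaps more in the spirit of the rest of the paper, one can avoid the explicit gcd machinery: by \thref{prop}(iv) write $ker(\overline D) = A \otimes_k \overline k = \overline A$, note $\overline B$ is a UFD, and observe that a generator $h$ of a proper principal ideal containing $(\overline D\,\overline B)$ must lie in $\overline A$ (since for any local slice $r \in B$ of $D$, $Dr \in A$ and $h \mid Dr$ forces $h \in \overline A$ because $\overline A$ is factorially closed in $\overline B$ by \thref{prop}(i) applied to $\overline D$). Then $h \in \overline A = A \otimes_k \overline k$ divides, in $\overline A$, every element of the ideal $(D B)\cap A$ extended to $\overline A$; but $A$ is a UFD (it is $k^{[2]}$ by \thref{mthm}, or at worst a polynomial ring by \thref{zu} in the graded case) and one checks the ideal $(DB)\cap A$ of $A$ is not contained in any proper principal ideal of $A$ — this is equivalent to irreducibility of $D$ — so again the base-change-of-gcd over the now-polynomial ring $A$ gives that $h$ is a unit. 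Either way the crux is the same flatness/Gauss-lemma descent; I would present whichever is shorter given the tools already set up.
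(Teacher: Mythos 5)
Your overall plan (suppose $(\overline{D}\,\overline{B})\subseteq(h)$ for a non-unit $h$ and descend the divisibility to $B$) is sound, but both routes you sketch have real gaps at the decisive step. First, you lean on $\overline{B}=B\otimes_k\overline{k}$ being a UFD in order to speak of $\gcd_{\overline{B}}$ and of factorizations into irreducibles; this is not justified and is false for general affine UFDs $B$ over $k$ (unique factorization is not preserved by the base change $k\to\overline{k}$: an affine curve can have trivial class group over $k$ and nontrivial class group over $\overline{k}$), and the lemma is stated for arbitrary affine UFDs, not only for $k^{[3]}$. Second, the crux of your main route --- that the gcd of $\{Db : b\in B\}$ being a unit in $B$ forces the corresponding gcd to be a unit after the faithfully flat base change --- is essentially the lemma itself restated, and you leave its proof as a gesture toward ``a standard Gauss-lemma/norm argument.'' Third, your alternative route rests on the assertion that irreducibility of $D$ is equivalent to the contracted ideal $(DB)\cap A$ not lying in a proper principal ideal of $A$; only the easy direction of this (via factorial closedness of $A$) is clear, the direction you actually need is asserted without proof, and it is delicate: note that the plinth ideal $DB\cap A=I_1$ \emph{can} sit inside a proper principal ideal of $A$ for irreducible $D$ (by \thref{trf}, $I_1=(X^{d+1})\subseteq XA$ for every irreducible homogeneous triangularizable $D$), so any argument here must genuinely exploit the full contracted ideal and cannot be waved through.

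For comparison, the paper's proof avoids all of this machinery: a minimal prime $\mathfrak{p}$ over the proper principal ideal $(\overline{b})\supseteq(\overline{D}\,\overline{B})$ has height one by Krull's principal ideal theorem; since $B\subseteq\overline{B}$ is flat it satisfies going-down, so $\q:=\mathfrak{p}\cap B$ is a nonzero prime of height one containing $(DB)$; and a height-one prime of the UFD $B$ is principal, contradicting irreducibility. Your norm idea can in fact be completed into a correct (if longer) alternative: choose a finite subextension $k\subseteq k'$ with $h\in B':=B\otimes_k k'$, note that $B'$ is free of rank $n=[k':k]$ over $B$, that $N_{B'/B}(h)$ is a non-unit of $B$ lying in $hB'$ (Cayley--Hamilton), and that $N_{B'/B}(h)$ divides $N_{B'/B}(Db)=(Db)^{n}$ for every $b$, so any prime factor of $N_{B'/B}(h)$ in the UFD $B$ divides every $Db$. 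But none of this is carried out in your write-up, and as it stands the argument does not close.
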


\begin{proof}
	
	Let $J=(DB)$ and $\overline{J}=(\overline{D} (\overline{B}))$. Clearly, $\overline{J}=J \overline{B}$. Suppose, if possible, $D$ is irreducible but $\overline{D}$ is not. 
	Then there exists $\overline{b} \in \overline{B}$ and a prime 
	ideal $\p$ of $\overline{B}$ such that $\hgt \p =1$ and 
	$\overline{J} \subseteq (\overline{b}) \subseteq \p$; and hence $J \subseteq \p \cap B= \q$. Since $B \subseteq \overline{B}$ is a flat extension, it satisfies the going down property (cf. \cite[5.D, Theorem 4]{mat}), and hence $\hgt \q=1$. Now, since $B$ is a UFD, $\q$ is principal, which contradicts that $D$ is irreducible. Hence the result follows. 
\end{proof}


The following result of Daigle (\cite[Corollary 2.5]{daig}) describes the structure of  LNDs on $k^{[3]}$ in terms of the Jacobian derivation.

\begin{thm}\thlabel{dai}
	Let $B=k^{[n]}$ and $D \in \LND(B)$. Suppose that $\{f_{1},\cdots,f_{n-1}\}$ is a set of algebraically independent elements in $B$ such that $ker(D)=k[f_{1},\ldots,f_{n-1}]=k^{[n-1]}$. Then $\Delta_{(f_{1},\ldots,f_{n-1})} \in \LND(B)$ and $D=a \Delta_{(f_{1},\ldots,f_{n-1})}$, for some $a \in ker(D)$.
\end{thm}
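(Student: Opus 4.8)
The plan is to realise $D$ as a multiple, over the fraction field $K=\operatorname{Frac}(B)$, of the Jacobian derivation $\Delta:=\Delta_{(f_{1},\ldots,f_{n-1})}$, and then to push the multiplier down into $A:=\ker D$.

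\emph{Step 1 (proportionality).} First I would record that, since $f_{1},\ldots,f_{n-1}$ are algebraically independent over $k$ and $\operatorname{char}k=0$, the differentials $df_{1},\ldots,df_{n-1}$ are $K$-linearly independent in the $n$-dimensional space $\Omega_{B/k}\otimes_{B}K$; hence the space of $k$-derivations of $K$ annihilating $f_{1},\ldots,f_{n-1}$ is one-dimensional over $K$. Both $D$ and $\Delta$ are nonzero elements of it ($\Delta\neq 0$ because the $f_{i}$ are algebraically independent, and $\Delta f_{i}=0$ by the ``repeated row'' property of the Jacobian determinant), so $D=\lambda\Delta$ for a unique $\lambda\in K^{\times}$. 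Applying this to any $h\in B$ shows $\ker\Delta\subseteq\ker D$, and since $A=k[f_{1},\ldots,f_{n-1}]\subseteq\ker\Delta$ trivially we get $\ker\Delta=\ker D=A$; in particular $A$ is factorially closed in $B$ by \thref{prop}(i).

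\emph{Step 2 (the main obstacle).} The heart of the argument is to show that $\Delta$ is an \emph{irreducible} derivation, i.e.\ that $e:=\gcd(\Delta X_{1},\ldots,\Delta X_{n})$ — the gcd of the maximal minors of the Jacobian matrix of $(f_{1},\ldots,f_{n-1})$ — is a unit. The approach I would take: if a prime $\pi$ divides $e$, then modulo $\pi$ all maximal minors vanish, so $\overline{f_{1}},\ldots,\overline{f_{n-1}}\in B/\pi B$ have linearly dependent differentials and hence (characteristic zero) are algebraically dependent over $k$; this yields a nonzero $P\in k^{[n-1]}$ with $P(f_{1},\ldots,f_{n-1})\in\pi B$, which is a nonzero element of $A$ divisible by $\pi$, so $\pi\in A$ by factorial closedness. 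Writing $\pi=Q(f_{1},\ldots,f_{n-1})$ with $Q$ necessarily irreducible in $k^{[n-1]}$ (as $k^{[n-1]}\hookrightarrow B$), one examines the fibre of $\phi:=(f_{1},\ldots,f_{n-1})\colon\A^{n}\to\A^{n-1}$ over the generic point of $V(Q)$: it is an integral affine curve over the perfect field $\operatorname{Frac}(k^{[n-1]}/(Q))$, cut out by the $n-1$ equations ``$f_{i}=\text{const}$'' in $\A^{n}$, hence generically smooth, so at some of its points the Jacobian matrix of $(f_{1},\ldots,f_{n-1})$ has rank $n-1$; but that fibre lies in $\phi^{-1}(V(Q))=V(\pi)$, where all maximal minors vanish and the rank is $\le n-2$ — a contradiction. (Equivalently: the reflexive rank-one $B$-module of $A$-derivations of $B$ is free, and $\Delta_{(f_{1},\ldots,f_{n-1})}$ generates it; this is the structural input behind the cited result, and is where I expect the real work to lie.)

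\emph{Step 3 (assembly).} Finally, writing $\lambda=a/b$ in lowest terms, the identity $bDg=a\,\Delta g$ for all $g\in B$ together with $\gcd(a,b)=1$ forces $b\mid\Delta X_{i}$ for each $i$, hence $b\mid e$; so $b$ is a unit by Step 2 and $\lambda\in B$. Then $DX_{i}=\lambda\,\Delta X_{i}$ gives $\gcd_{i}(DX_{i})=\lambda e\sim\lambda$, and since the gcd of the image of a locally nilpotent derivation on a UFD lies in its kernel, $\lambda$ lies in $\ker D=A$ (using that $A$ is factorially closed to absorb the unit). As $\lambda\in A=\ker D=\ker\Delta$ and $\lambda\neq 0$, an easy induction gives $D^{m}=\lambda^{m}\Delta^{m}$ on $B$ for all $m$; hence $\Delta^{m}g=0$ whenever $D^{m}g=0$, so $\Delta$ is locally nilpotent because $D$ is. This yields $\Delta_{(f_{1},\ldots,f_{n-1})}\in\LND(B)$ and $D=\lambda\,\Delta_{(f_{1},\ldots,f_{n-1})}$ with $\lambda\in\ker D$, as required.
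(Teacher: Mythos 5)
This is a result the paper does not prove: it is quoted verbatim from Daigle \cite{daig} (Corollary 2.5), so there is no in-paper argument to compare against. Your blind proof is, as far as I can check, correct and essentially self-contained, and you have correctly isolated where the real content lies: once one knows $D=\lambda\Delta$ over $K=\operatorname{Frac}(B)$ (Step 1, which is routine linear algebra on $\Omega_{K/k}$), everything reduces to showing that $e=\gcd(\Delta X_1,\ldots,\Delta X_n)$ is a unit, and your Step 3 (local-slice/factorial-closedness argument to put $\lambda$ in $\ker D$, then $D^m=\lambda^m\Delta^m$ to transfer local nilpotency) is the standard way to finish. Your Step 2 is the genuinely different route: Daigle's treatment is module-theoretic — $\operatorname{Der}_A(B)$ is a reflexive rank-one module, hence free over the UFD $B$ with an irreducible generator, which is then identified with $\Delta_{(f_1,\ldots,f_{n-1})}$ — whereas you argue geometrically that a prime $\pi\mid e$ would force $\pi\in A$ (via algebraic dependence of the $\overline{f_i}$ mod $\pi$ and factorial closedness, \thref{prop}(i)) and then contradict generic smoothness of the fibre of $(f_1,\ldots,f_{n-1})$ over the generic point of $V(Q)$. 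That argument is sound, but the one step a reader must check is the applicability of the Jacobian criterion: you need the $n-1$ equations $f_i=c_i$ to generate the full ideal of that fibre, not merely cut it out set-theoretically. This does hold in your setup — $B\otimes_A L$ is a localization of the domain $B/\pi B$ (using $A\cap\pi B=\pi A$, again by factorial closedness), so the ideal is prime — but it deserves to be said explicitly, since it is exactly the point where the hypothesis that $k[f_1,\ldots,f_{n-1}]$ is the \emph{entire} kernel (and not a proper subring such as $k[X^2]\subset\ker(\partial/\partial Y)$ in $k[X,Y]$, where the conclusion fails) enters the geometry. With that caveat spelled out, your proof is a valid and more elementary substitute for the cited result.
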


We now recall the concept of Newton polygon. Let $A$ be a commutative $k$-domain and $B=A[X,Y]$ be a $\mathbb{Z}^2$-graded domain with respect to the following weights:
\begin{equation}\label{*}
	wt(X)=(1,0),\, wt(Y)=(0,1)
\end{equation} 
and $wt(a)=(0,0)$ for all $a \in A$. We record the definition of the Newton polygon of $f \in B$ below.
\begin{definition}
\em{	Let $f \in B:=A[X,Y]$. The {\it Newton polygon} of $f$ is denoted by $Newt_{\mathbb{Z}^2}(f)$ and is defined to be the convex hull in $\mathbb{R}^2$ of the following set:
	$$
	S=\left\{ (i,j) \in \mathbb{Z}^2 \,\,\,|\,\,\, f=\sum a_{ij}X^iY^j, a_{ij}\in A\setminus \{0  \}\right\} \cup \left\{(0,0)\right\} 
	$$}
\end{definition}

The following well known result (\cite[Theorem 4.5]{GFB}) is needed in section 3 of this note.

\begin{thm}\thlabel{np}
	Let $A$ be a rigid affine $k$-domain, i.e., there is no non-trivial LND on $A$. Suppose $B=A[X,Y]$ is a $\mathbb{Z}^2$-graded domain with respect to the weights defined in \eqref{*} and $D$ a non-trivial locally nilpotent derivation on $B$. Then, for $f\in ker(D)\setminus A$, $Newt_{\mathbb{Z}^2}(f)$ is a triangle with vertices $(0,0), (m,0), (0,n)$ where $m,n \in \mathbb{N}$ and $m \mid n$ or $n \mid m$.
\end{thm}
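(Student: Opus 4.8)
The plan is to reduce the statement to the case where $A$ is a field and then read the Newton polygon off from Rentschler's theorem together with the classical description of the coordinates of $K^{[2]}$. To pass to a field, one first uses that $A$ is rigid to conclude $A\subseteq\ker(D)$ — a standard consequence of rigidity. Granting this, put $K=\operatorname{Frac}(A)$; then $S:=A\setminus\{0\}$ is a multiplicatively closed subset of $\ker(D)\setminus\{0\}$, so by \thref{prop}(iii) the induced derivation $D_K:=S^{-1}D$ is a non-trivial element of $\LND(K[X,Y])$ with $K[X,Y]=K^{[2]}$ and $\ker(D_K)=S^{-1}\ker(D)$. Since $f\in A[X,Y]$, a monomial coefficient of $f$ vanishes in $A$ if and only if it vanishes in $K$, so $Newt_{\mathbb{Z}^2}(f)$ is the same whether $f$ is viewed in $A[X,Y]$ or in $K[X,Y]$; and since $A[X,Y]\cap K=A$ we have $f\notin K$, hence $f\in\ker(D_K)\setminus K$. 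Thus it suffices to prove the theorem when $A=K$ is a field.

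So assume $A=K$ is a field. By Rentschler's theorem \thref{ren} a tame automorphism carries $D_K$ to $p(X)\frac{\partial}{\partial Y}$ with $p\neq0$, and since $\ker\bigl(p(X)\frac{\partial}{\partial Y}\bigr)=K[X]$, pulling back gives $\ker(D_K)=K[U]$ for some coordinate $U$ of $K[X,Y]$ (i.e.\ $K[X,Y]=K[U,V]$ for a suitable $V$). Write $f=\phi(U)$ with $\phi\in K[t]$ of degree $r=\deg\phi\geq1$. Then $Newt_{\mathbb{Z}^2}(f)=r\cdot Newt_{\mathbb{Z}^2}(U)$: the polygon $Newt_{\mathbb{Z}^2}(U)$ is convex and contains $(0,0)$, so $Newt_{\mathbb{Z}^2}(U^{r'})=r'\cdot Newt_{\mathbb{Z}^2}(U)\subseteq r\cdot Newt_{\mathbb{Z}^2}(U)$ for $r'\leq r$, while each vertex of $r\cdot Newt_{\mathbb{Z}^2}(U)$ other than $(0,0)$ is the exponent of an extreme — hence uncancellable — monomial of the top term $c_rU^r$ of $\phi(U)$ and so persists in $f$. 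Hence it is enough to show that $Newt_{\mathbb{Z}^2}(U)$ is a triangle with vertices $(0,0),(m_0,0),(0,n_0)$ satisfying $m_0\mid n_0$ or $n_0\mid m_0$ (degenerate triangles admitted when $U$ involves only one of $X,Y$); then $(m,n)=(rm_0,rn_0)$ works.

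It remains to describe the Newton polygon of a coordinate $U$ of $K^{[2]}$, which is classical. If $\deg U\leq1$ this is immediate. If $d:=\deg U\geq2$, the Jung--van der Kulk structure theorem — equivalently, the Abhyankar--Moh--Suzuki theorem — shows that the top-degree form of $U$ equals $c\,\ell^{\,d}$ for a linear form $\ell=\alpha X+\beta Y$; if $\alpha\beta\neq0$ then $X^{d}$ and $Y^{d}$ both occur in $U$, which with $\deg U=d$ forces $Newt_{\mathbb{Z}^2}(U)=\operatorname{conv}\{(0,0),(d,0),(0,d)\}$, while if, say, $\ell$ involves only $X$, then $\deg_X U=d$ and $\deg_Y U=e$ with $e\mid d$, and running the Abhyankar--Moh reduction (each step an elementary automorphism composed with a linear change, under which the Newton polygon transforms transparently) yields $Newt_{\mathbb{Z}^2}(U)=\operatorname{conv}\{(0,0),(d,0),(0,e)\}$. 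In every case $Newt_{\mathbb{Z}^2}(U)$ is the asserted triangle, its leg lengths being $\deg_X U$ and $\deg_Y U$, one dividing the other. I expect this last step — pinning down the Newton polygon of an arbitrary coordinate of $K^{[2]}$ — to be the main obstacle: it is where the two-dimensional automorphism theorem is genuinely used and where the divisibility $m\mid n$ or $n\mid m$ originates. The first two paragraphs are by contrast essentially formal, the only non-trivial input being the fact, invoked at the outset, that rigidity of $A$ forces $A\subseteq\ker(D)$.
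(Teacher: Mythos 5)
First, a point of comparison: the paper does not prove this statement at all --- it is quoted verbatim from \cite[Theorem 4.5]{GFB} --- so there is no in-paper argument to measure yours against. Judged on its own, your reduction is the natural one and most of it is sound: the localization to $K=\operatorname{Frac}(A)$, the identification $\ker(D_K)=K[U]$ for a coordinate $U$ via \thref{ren}, and the identity $Newt_{\mathbb{Z}^2}(\phi(U))=\deg(\phi)\cdot Newt_{\mathbb{Z}^2}(U)$ (your extreme-point/no-cancellation argument for this is correct) are all fine, and the description of the Newton polygon of a plane coordinate, while only sketched, is indeed classical via Abhyankar--Moh--Suzuki and Jung--van der Kulk.

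The genuine gap is the very first step, which you dismiss as ``a standard consequence of rigidity'': the inclusion $A\subseteq\ker(D)$. Rigidity of $A$ says there is no non-trivial LND \emph{of $A$}, but a locally nilpotent derivation of $A[X,Y]$ has no reason to map $A$ into $A$, so you cannot restrict $D$ to $A$ and invoke rigidity; note also that your appeal to \thref{prop}(iii) with $S=A\setminus\{0\}$ already presupposes this inclusion. For non-rigid $A$ the conclusion fails outright (take $A=k[t]$ and $D=\partial/\partial t$ on $k[t][X,Y]$), so the only way rigidity can enter is by manufacturing, from a hypothetical $D$ with $D(A)\neq 0$, a non-trivial LND on $A$ itself; in the literature this is done with exactly the filtration/associated-graded machinery the paper records as \thref{dtame} and \thref{gd}, applied to the $\mathbb{Z}^2$-grading of $A[X,Y]$, and the statement $A\subseteq\ker(D)$ is in fact part of the very theorem being cited in \cite[Theorem 4.5]{GFB}, proved there alongside the Newton-polygon assertion. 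So your outline assumes the one input that is specific to the rigid setting, and everything downstream of it is the comparatively formal field case. Contrary to your closing remark, it is this step --- not the Newton polygon of a coordinate --- that is the real obstacle to a self-contained proof.
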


\begin{rem}\thlabel{grnp}
	\em{ By \thref{np}, it follows that if $A=k$, $B=k[X,Y]$, and $f\in ker(D)\setminus k$ is such that $n=\deg_X(f) > \deg_Y(f)=m$, then $f$ has the following form:
	$$
	f= \tilde{f}(X)+ \sum_{j=1}^{m-1} f_j(X) Y^j+\alpha Y^m, 
	$$
	where $\deg_X(\tilde{f})=n$, $\alpha \in k^*$, $n=mq$ for some positive integer $q$ and $\deg_X (f_j) \leqslant n-jq$.}
\end{rem}

Let $B$ be an affine domain with a proper $\mathbb{Z}$-filtration $\{B_{n} \mid n \in \mathbb{Z}\}$. If $D$ is a non-zero LND on $B$ such that for all $n \in \mathbb{Z}$, $D(B_n) \subset B_{n+t}$ for some $t \in \mathbb{Z}$ (i.e., $D$ respects the filtration on $B$), then it will induce $gr(D) \in \LND(gr(B))$, where $gr(B)$ is the associated graded ring of $B$ with respect to the given filtration (\cite[pg. 10]{GFB}). Now suppose that $\rho : B \rightarrow gr(B)$, denote the natural map defined by $\rho(b)=b+B_{i-1}$, where $b \in B_{i}\setminus B_{i-1}$ for some $i \in \mathbb{Z}$.
Then we have the following result due to Derksen {\it et al.} (\cite{dom}). For reference one can see \cite[Theorem 2.6]{cra}.

\begin{prop}\thlabel{gd}
	Let $B, G, \rho$ and $D$ be the same as mentioned in the above paragraph. Then $gr(D) \neq 0$ and $\rho(ker(D)) \subset ker(gr(D))$.   
\end{prop}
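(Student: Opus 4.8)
The plan is to unwind the construction of $\gr(D)$ from the filtration and then verify both assertions essentially by inspection; the only step that needs an actual idea is the non-vanishing of $\gr(D)$, and for that one must be precise about which shift $t$ is used to build $\gr(D)$.

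First I would fix notation. By axioms (a)--(c) of a proper $\mathbb{Z}$-filtration, every nonzero $b \in B$ lies in a unique $B_i \setminus B_{i-1}$; write $\nu(b) = i$ for this filtration degree, so that $\rho(b) = b + B_{\nu(b)-1}$ is a nonzero homogeneous element of $\gr(B)$. Next I would check that the set $T = \{\, t \in \mathbb{Z} \mid D(B_n) \subseteq B_{n+t} \text{ for all } n \,\}$ has a least element: $T \neq \emptyset$ by hypothesis, and if $T$ had no lower bound then, choosing any $b$ with $Db \neq 0$ (possible since $D \neq 0$) and any $i$ with $b \in B_i$, we would get $Db \in B_{i+t}$ for arbitrarily negative $t$, hence $Db \in \bigcap_{m \in \mathbb{Z}} B_m = \{0\}$ by axiom (c) --- a contradiction. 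Let $t_0 = \min T$; this is the shift with respect to which $\gr(D)$ is the homogeneous degree-$t_0$ locally nilpotent derivation of $\gr(B)$ determined by $\gr(D)(b + B_{i-1}) = Db + B_{i+t_0-1}$ for $b \in B_i$ (with any strictly larger shift the induced map would be identically zero, which is exactly why $t_0$ is forced).

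To see $\gr(D) \neq 0$ I would use the minimality of $t_0$: since $t_0 - 1 \notin T$, there exist $n$ and $b \in B_n$ with $Db \notin B_{n+t_0-1}$. Such a $b$ cannot lie in $B_{n-1}$, for then $Db \in B_{(n-1)+t_0} \subseteq B_{n+t_0-1}$; hence $\nu(b) = n$, the element $\rho(b)$ is a nonzero homogeneous element, and $\gr(D)(\rho(b)) = Db + B_{n+t_0-1} \neq 0$. For the kernel statement, let $a \in ker(D)$ be nonzero and put $i = \nu(a)$; then $\rho(a) = a + B_{i-1}$ is homogeneous of degree $i$ and $\gr(D)(\rho(a)) = Da + B_{i+t_0-1} = 0$ because $Da = 0$, so $\rho(a) \in ker(\gr(D))$; since also $0 \in ker(\gr(D))$, we conclude $\rho(ker(D)) \subseteq ker(\gr(D))$.

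The only real obstacle is the claim $\gr(D) \neq 0$: it would be false if $\gr(D)$ were formed with a non-minimal shift, so the content of that step is precisely that $D$ does not strictly lower the filtration degree on every element of $B$ once the correct (least admissible) $t$ is used. The rest is bookkeeping with the filtration axioms --- axiom (c) both to make $\nu$ well defined and to bound $T$ below, and the multiplicativity in axiom (d) to know $\gr(D)$ is a derivation on the domain $\gr(B)$, though the latter is already granted in the statement.
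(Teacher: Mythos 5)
Your proof is correct. Note, however, that the paper does not prove this proposition at all: it is quoted as a known result of Derksen, Hadas and Makar-Limanov, with the citation \cite{dom} and the pointer to \cite[Theorem 2.6]{cra}, and the construction of $gr(D)$ itself is delegated to \cite[pg.\ 10]{GFB}. So there is no in-paper argument to compare against; what you have supplied is a self-contained verification of the cited fact, and it matches the standard one. Your bookkeeping is sound: axioms (a)--(c) of the proper $\mathbb{Z}$-filtration make the filtration degree $\nu$ well defined, axiom (c) again bounds the set $T$ of admissible shifts from below (using that $D\neq 0$), and the minimality of $t_0$ is exactly what forces $gr(D)\neq 0$ --- you correctly isolate this as the only step with content, since with a non-minimal shift the induced map is identically zero. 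The kernel inclusion is, as you say, immediate from the formula $gr(D)(b+B_{i-1})=Db+B_{i+t_0-1}$. The one convention you rely on --- that $gr(D)$ is built with the \emph{least} $t$ such that $D(B_n)\subseteq B_{n+t}$ for all $n$ --- is indeed the convention of \cite{GFB}, and you are right to flag it explicitly, since the statement is false for any other choice of shift.
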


The following result is a special case of a result of Daigle \cite[Theorem 1.7]{dtame}. This result first appeared in the thesis of Wang (see \cite{twang}).

\begin{thm}\thlabel{dtame}
	Let $B$ be a $\mathbb{Z}$-graded affine $k$-domain. Then every non-zero $D \in \LND(B)$ respects the $\mathbb{Z}$-filtration induced by the grading.
	
\end{thm}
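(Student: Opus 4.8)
The plan is to reduce the assertion to the standard fact that a $k$-derivation of a $\mathbb{Z}$-graded ring splits into homogeneous components, together with the remark that only finitely many of these components can be nonzero once the algebra is affine. First I would fix homogeneous generators: since $B$ is a $\mathbb{Z}$-graded affine $k$-domain, it is generated over $k$ by finitely many homogeneous elements $g_1,\dots,g_r$ --- take any finite generating set and replace each element by its homogeneous components --- say with $\deg g_\ell=d_\ell\in\mathbb{Z}$ for $1\leqslant\ell\leqslant r$. Write $B_{\leqslant n}:=\bigoplus_{i\leqslant n}B_i$ for the filtration induced by the grading; since every element of $B$ has only finitely many nonzero homogeneous components we have $B=\bigcup_{n}B_{\leqslant n}$, and saying that $D$ respects this filtration means precisely that there is a fixed $t\in\mathbb{Z}$ with $D(B_{\leqslant n})\subseteq B_{\leqslant n+t}$ for all $n$.

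Next I would introduce the homogeneous decomposition of $D$. For $j\in\mathbb{Z}$ define a $k$-linear map $D_j\colon B\to B$ by $D_j(b)=\sum_i(Db_i)_{i+j}$, where $b=\sum_i b_i$ with $b_i\in B_i$; this is the ``degree-$j$ part'' of $D$, so $D_j(B_i)\subseteq B_{i+j}$ and $D=\sum_j D_j$ pointwise (the sum being finite on each element). A routine application of the Leibniz rule to homogeneous elements shows that each $D_j$ is again a $k$-derivation of $B$. Now $D_j(g_\ell)=(Dg_\ell)_{d_\ell+j}$, and since each $Dg_\ell$ has only finitely many nonzero homogeneous components there is a finite set $F\subseteq\mathbb{Z}$ such that $D_j(g_\ell)=0$ for all $\ell$ whenever $j\notin F$; as a $k$-derivation of $B$ is determined by its values on $g_1,\dots,g_r$, this forces $D_j=0$ for every $j\notin F$. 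Moreover $F\neq\emptyset$, since otherwise $D=0$. Hence $D=\sum_{j\in F}D_j$.

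Finally I would put $t=\max F$. For any $n$ and any $b=\sum_{i\leqslant n}b_i\in B_{\leqslant n}$, each $D_j b=\sum_{i\leqslant n}(Db_i)_{i+j}$ lies in $\bigoplus_{i\leqslant n}B_{i+j}\subseteq B_{\leqslant n+j}\subseteq B_{\leqslant n+t}$, so $Db=\sum_{j\in F}D_j b\in B_{\leqslant n+t}$. Thus $D(B_{\leqslant n})\subseteq B_{\leqslant n+t}$ for every $n$, which is what was to be shown. I do not anticipate a real obstacle here: the argument is elementary, and the only essential use of the hypotheses is that affineness --- i.e., finitely many homogeneous generators --- produces the uniform degree shift $t$; this is exactly the point where the statement would break down for an arbitrary graded domain. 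Verifying that each $D_j$ is a derivation, and the bookkeeping with the filtration, are routine.
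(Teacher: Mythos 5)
Your proof is correct. Note that the paper does not actually prove this theorem; it quotes it as a special case of Daigle's result on tame degree functions (and Wang's thesis), so there is no in-paper argument to compare against. Your homogeneous-decomposition argument is the standard proof for the graded case and is complete: each $D_j$ is a derivation (the Leibniz check on homogeneous elements is exactly as you say), affineness yields the finite nonempty set $F$ of degrees with $D_j\neq 0$, and $t=\max F$ gives $D(B_{\leqslant n})\subseteq B_{\leqslant n+t}$ for all $n$. One remark: you never use that $D$ is locally nilpotent, so you have in fact proved the stronger statement that \emph{every} $k$-derivation of a $\mathbb{Z}$-graded affine $k$-domain respects the induced filtration --- consistent with the generality of the cited sources, where the genuine difficulty lies with filtrations not arising from gradings.
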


We also fix a notation which will be used in the note. Let $f \in k[U,V,W]$. Then $f_U,f_V,f_W$ will denote the partial derivatives of $f$ with respect to $U,V,W$ respectively.

\section{Homogeneous locally nilpotent derivations of rank two}

 {\bf $(*)$} We first fix a few notation for this section.
Throughout this section, unless specified, $D$ denotes an irreducible homogeneous LND of rank $2$ on $k[U,V,W] $ such that $\deg(D)=d\, (\geqslant 0)$ with respect to the standard weights $(1,1,1)$. 
Since $rank(D)=2$, without loss of generality we can assume that $DU=0$ and $ker(D)=k[U,P]$ for some homogeneous polynomial $P \in k[U,V,W]$ (cf. \thref{mthm} and \thref{zu}). As $D$ is irreducible, multiplying $P$ by a suitable constant in $k$ we have $D=\Delta_{(U,P)}$ (cf. \thref{dai}). Hence 
$ DU=0 , DV=-P_{W} ,  DW=P_{V} $.
If $\deg(D)=d$, then
$P$ is a homogeneous polynomial of degree $d+2$.

\smallskip
 
First we observe some results (Lemmas \ref{sa}, \ref{sb} and \ref{tr}) which exhibit the structure of $P$.
\begin{lem}\thlabel{sa}
	Let $D$ and $P$ be the same as in the paragraph $(*)$. Then there exists a linear system of variables $\{X,Y,Z\}$ of $k[U,V,W]$ such that upto multiplication by a unit, $P$ has the form 
	$$
	 Y^{d+2}+ Xq(X,Y,Z)
	$$ 
	 where $q(X,Y,Z)$ is a degree $d+1$ homogeneous polynomial, $DX=0$ and $0< \deg_{D}(Y)< ~\deg_{D}(Z).$
\end{lem}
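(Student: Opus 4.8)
The plan is to pass to the quotient $\overline{B}:=k[U,V,W]/(U)\cong k^{[2]}$. Since $DU=0$, the ideal $(U)$ is $D$-stable, so $D$ descends to a locally nilpotent derivation $\overline{D}$ on $\overline{B}$, which is homogeneous of degree $d$ for the standard grading on $\overline{B}$. First I would check that $\overline{D}\neq 0$. If $\overline{D}=0$, then $\overline{P_V}=\overline{P_W}=0$ in $\overline{B}$, i.e. $P_V,P_W\in(U)$; by Euler's formula $(d+2)P=UP_U+VP_V+WP_W$ this forces $P\in(U)$ (as $d+2\neq 0$), and writing $P=Uh$ gives $DV=-Uh_W$, $DW=Uh_V$, $DU=0$, so $(DB)\subseteq(U)$, contradicting the irreducibility of $D$ (\thref{def}(v)). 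Hence $\overline{D}$ is a nonzero homogeneous locally nilpotent derivation on $k^{[2]}$, and $\overline{P}\neq 0$.

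Next I would pin down $\overline{P}$; this is the crux of the argument. Since $DP=0$ we have $\overline{P}\in\ker(\overline{D})$, and $\overline{P}$ is homogeneous of degree $d+2$. By Rentschler's theorem \thref{ren}, $\ker(\overline{D})=k[f]$ for some coordinate $f$ of $\overline{B}$; because $\overline{D}$ is homogeneous, $\ker(\overline{D})$ is a graded subring, so $f$ may be chosen homogeneous; and a homogeneous coordinate of $k^{[2]}$ must be a linear form, since extending scalars to $\overline{k}$ a homogeneous polynomial of degree $\geqslant 2$ in two variables is either reducible or a unit times a power of a linear form, so that $\overline{k}[V,W]/(f)$ fails to be a polynomial ring in one variable unless $\deg(f)=1$. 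Thus $\ker(\overline{D})=k[\ell]$ for a nonzero linear form $\ell$, and since $k[\ell]=\bigoplus_{j\geqslant 0}k\,\ell^{j}$ with $\ell^{j}$ homogeneous of degree $j$, the homogeneous element $\overline{P}$ of degree $d+2$ equals $c_{0}\ell^{d+2}$ for some $c_{0}\in k^{*}$. After a linear change of the variables $V,W$ I may assume $\ell=V$; setting $\{X,Y,Z\}:=\{U,V,W\}$ for the new variables, we get $P-c_{0}Y^{d+2}\in(X)$, and comparing homogeneous degrees $P-c_{0}Y^{d+2}=Xq(X,Y,Z)$ with $q$ homogeneous of degree $d+1$. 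Replacing $P$ by the unit multiple $c_{0}^{-1}P$ yields $P=Y^{d+2}+Xq(X,Y,Z)$, with $DX=DU=0$, as asserted.

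It then remains to establish $0<\deg_{D}(Y)<\deg_{D}(Z)$. For the left inequality I would note that $DY\neq 0$: otherwise $DX=DY=0$ for the coordinate system $\{X,Y,Z\}$, which would force $rank(D)\leqslant 1$, contradicting $rank(D)=2$; hence $m:=\deg_{D}(Y)\geqslant 1$. For the right inequality I would use that $\deg_{D}$ is a degree function (\thref{def}(ii), cf. \cite{GFB}): $\deg_{D}(ab)=\deg_{D}(a)+\deg_{D}(b)$ and $\deg_{D}(a+b)\leqslant\max\{\deg_{D}(a),\deg_{D}(b)\}$, with equality whenever $\deg_{D}(a)\neq\deg_{D}(b)$. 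Since $P\in\ker(D)$ we have $\deg_{D}(P)=0$, whereas $\deg_{D}(Y^{d+2})=(d+2)m>0$; from $P=Y^{d+2}+Xq$ this forces $q\neq 0$ and $\deg_{D}(Xq)=(d+2)m$, hence $\deg_{D}(q)=(d+2)m$ because $\deg_{D}(X)=0$. On the other hand, writing $q=\sum_{a+b+c=d+1}\gamma_{abc}X^{a}Y^{b}Z^{c}$, each nonzero monomial has $\deg_{D}$-value $bm+c\deg_{D}(Z)$; if $\deg_{D}(Z)\leqslant m$ this is at most $(b+c)m\leqslant(d+1)m<(d+2)m$, contradicting $\deg_{D}(q)=(d+2)m$. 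Therefore $\deg_{D}(Z)>m=\deg_{D}(Y)>0$, completing the argument.

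The main obstacle is the middle step: identifying $\overline{P}$ precisely as the $(d+2)$-th power of a linear form. Everything there reduces to knowing that $\ker(\overline{D})$ is generated by a linear form, which rests on Rentschler's theorem together with the homogeneity of $\overline{D}$ (via the small lemma that a homogeneous coordinate of $k^{[2]}$ is linear); once this is in hand, both the form of $P$ and the two $\deg_{D}$-inequalities follow by routine degree bookkeeping.
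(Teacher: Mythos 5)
Your proposal is correct and follows essentially the same route as the paper: reduce modulo $U$, use irreducibility to see the induced derivation on $k[V,W]$ is nonzero, apply Rentschler's theorem together with homogeneity to identify the kernel downstairs as $k[\ell]$ for a linear form $\ell$ (hence $\overline{P}$ is a scalar times $\ell^{d+2}$), and then compare $\deg_D$-values of the terms of $P$ to force $0<\deg_D(Y)<\deg_D(Z)$. The only cosmetic difference is that you justify a few points the paper leaves implicit (the Euler-formula detour for $\overline{D}\neq 0$, and why a homogeneous coordinate of $k^{[2]}$ is linear), and your unproved assertion that $\overline{P}\neq 0$ is harmless since it follows from the same irreducibility argument.
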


\begin{proof}
	Since $DU=0$, $D$ induces an LND $\overline{D}= D \,(\text{mod~} U)$ on $\frac{k[U,V,W]}{(U)} \cong k[V,W]$.  Note that $\overline{D}$ is a non-zero homogeneous LND of degree $d$. By \thref{ren}, there exists a system of variables $\{V_{1}, V_{2}\}$ of $k[V,W]$ such that $\overline{D}=f(V_1) \frac{\partial}{\partial V_2}$ for some $f \in k^{[1]}$, and $ker(\overline{D})= k[V_{1}]$. Further, $\{V_1,V_2\}$ can be chosen to be linear in $V$ and $W$, as $\overline{D}$ is homogeneous.
	Since $P$ is a homogeneous polynomial of degree $d+2$, if $\overline{P}=P\, (\text{mod~} U)$, then $\overline{P}$ is a homogeneous polynomial in $ker(\overline{D})=k[V_{1}]$ of degree $d+2$. Therefore, with respect to the linear system of variables $\{U,V_{1},V_{2}\}$ of $k[U,V,W]$, we get a homogeneous polynomial $q$ of degree $d+1$, such that upto multiplication by a unit, $P=V_{1}^{d+2}+U q(U,V_{1},V_{2})$. 
	
	Suppose $\deg_{D}(V_{1}) \geqslant \deg_{D}(V_{2})$. From the structure of $P$, it is clear that $\deg_{D}(P)=(d+1)\deg_{D}(V_{1})$, and hence $\deg_{D}(V_{1})=0$. This contradicts the fact that $rank(D)=2$. Therefore, we must have $\deg_{D}(V_{1})< \deg_{D}(V_{2})$. Hence renaming the coordinate system $\{U,V_{1},V_{2}\}$ as $\{X,Y,Z\}$, the result follows.
\end{proof}

\begin{lem}\thlabel{sb}
	Let $D$ and $P$ be the same as in the paragraph $(*)$. Then there exists a linear system of variables $\{X,Y,Z\}$ of $k[U,V,W]$ such that $0=\deg_{D}(X) < \deg_{D}(Y) < \deg_{D}(Z)$, and upto multiplication by a unit, the polynomial $P$ has the following form:
	
	\medskip
	\noindent
	(i) For $d=0$, $P=Y^2+XZ$.
	
	\medskip
	\noindent
	(ii) For $d \geqslant 1$,
	$$
	P=Y^{d+2}+ Xf_{d+1}(X,Y) + Xf_{d}(X,Y)Z+ \dots+Xf_{i+2}(X,Y)Z^{d-i-1} +\beta X^{i+2}Z^{d-i}
	$$ 
    where $0 \leqslant i \leqslant d-1$ such that $d-i \mid d+2$, $\beta \in k^{*}$ and $f_{j}(X,Y)$ is a  homogeneous polynomials of degree $j$, for every $j$, $i+2 \leqslant j \leqslant d+1$.	
\end{lem}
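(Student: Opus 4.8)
By \thref{sa} we may choose a linear coordinate system $\{X,Y,Z\}$ of $k[U,V,W]$ so that, up to a unit, $P = Y^{d+2} + Xq(X,Y,Z)$ with $q$ homogeneous of degree $d+1$, $DX = 0$ and $0 < \deg_D Y < \deg_D Z$; write $B := k[X,Y,Z]$. Since $DY = -P_Z$ and $\deg_D Y > 0$, we have $P_Z \neq 0$, so $Z$ occurs in $P$; put $N := \deg_Z P \geqslant 1$. As $X$ has $Z$-degree $0$ and $Y^{d+2}$ involves no $Z$, it follows that $N = \deg_Z(Xq) = \deg_Z q \leqslant d+1$; moreover, for $1 \leqslant j \leqslant N-1$ the coefficient of $Z^j$ in $P$ is $X$ times the coefficient of $Z^j$ in $q$, hence of the form $X f_{d+1-j}(X,Y)$ with $f_{d+1-j}$ homogeneous of degree $d+1-j$, while the coefficient of $Z^0$ is $Y^{d+2} + X f_{d+1}(X,Y)$. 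It thus remains to pin down $N$ and to show the coefficient of $Z^N$ is a monomial in $X$. The tool is \thref{np}, but it needs a rigid coefficient ring and $k[X]$ is not rigid; the fix is to localize at $X$.

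Consider $B_X = A[Y,Z]$ where $A := k[X,X^{-1}]$. Since $X \in ker D$ is nonzero, \thref{prop}(iii) (with $S = \{X^n \mid n \geqslant 0\}$) yields $D_X \in \LND(B_X)$ with $ker(D_X) = S^{-1}(ker D) = k[X,X^{-1},P] = A[P]$. The ring $A$ is an affine $k$-domain, and any locally nilpotent derivation of $A$ annihilates the units of $A$ (its kernel is factorially closed by \thref{prop}(i)), in particular annihilates $X$ and $X^{-1}$, hence is zero; so $A$ is rigid. Grading $B_X$ by $wt(Y) = (1,0)$, $wt(Z) = (0,1)$ and $wt(A) = (0,0)$, and noting $P \in ker(D_X) \setminus A$ (it contains the monomial $Y^{d+2}$), \thref{np} shows that $Newt_{\mathbb{Z}^2}(P)$ is the triangle with vertices $(0,0)$, $(d+2,0)$ and $(0,N)$ --- here $d+2 = \deg_Y P$ and $N = \deg_Z P$ read off the two extents of the triangle --- and that $(d+2) \mid N$ or $N \mid (d+2)$. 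The first alternative is impossible because $N \leqslant d+1 < d+2$, so $N \mid (d+2)$; when $d \geqslant 1$, this together with $N \leqslant d+1$ and $d+1 \nmid d+2$ forces $N \leqslant d$. Put $i := d - N$, so $0 \leqslant i \leqslant d-1$ and $d - i = N \mid (d+2)$.

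Every monomial $X^a Y^b Z^c$ occurring in $P$ has $(b,c)$ in the above triangle, i.e. $Nb + (d+2)c \leqslant N(d+2)$; putting $c = N$ forces $b = 0$. Hence the coefficient of $Z^N$ in $P$ lies in $k[X]$, and being a nonzero homogeneous element of degree $d+2-N$ (nonzero since $N = \deg_Z P$) it equals $\beta X^{d+2-N} = \beta X^{i+2}$ with $\beta \in k^*$. Together with the first paragraph this is precisely the form asserted in (ii). Finally, if $d = 0$ then $N = 1$, and the same computation gives $P = Y^2 + X f_1(X,Y) + \beta X Z$ with $\beta \in k^*$ and $f_1$ homogeneous of degree $1$; replacing $Z$ by the linear coordinate $Z' := \beta Z + f_1(X,Y)$ turns $P$ into $Y^2 + X Z'$, and since $\deg_D(f_1(X,Y)) \leqslant \deg_D Y < \deg_D Z = \deg_D Z'$ the chain $0 = \deg_D X < \deg_D Y < \deg_D Z'$ survives, giving (i) after renaming. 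The one genuinely new ingredient is the passage to the rigid ring $A = k[X,X^{-1}]$ so that \thref{np} becomes applicable; once that is in place, both the divisibility $d - i \mid d+2$ and the monomial shape of the leading coefficient drop straight out of the Newton polygon, so no individual step should be especially delicate.
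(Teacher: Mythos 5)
Your proof is correct and follows essentially the same route as the paper: reduce to the normal form of \thref{sa}, then apply the Newton polygon theorem (\thref{np}) over a rigid localization of $k[X]$ to force $\deg_Z P \mid \deg_Y P = d+2$ and to show the top $Z$-coefficient is a monomial in $X$. The only cosmetic differences are that you localize to $k[X,X^{-1}]$ where the paper uses $k(X)$ (your choice actually fits the ``affine'' hypothesis of \thref{np} more literally), and that you settle the case $d=0$ by the same Newton-polygon computation, whereas the paper argues there via factorial closedness of $\ker(D)$.
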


\begin{proof}
	In \thref{sa}, we see that $P=\alpha^{\prime } P^{\prime}$ such that
	$$
	P^{\prime}=Y^{d+2}+Xq(X,Y,Z),
	$$ 
    and $\alpha^{\prime} \in k^*$, where $0=\deg_{D}(X)< \deg_{D}(Y) < \deg_{D}(Z)$ and $q(X,Y,Z)$ is a homogeneous polynomial of degree $d+1$. We rename $P^{\prime}$ as $P$ and proceed. 
    
 	Now for $d=0$, $P=Y^2+X(\alpha X+\beta Y+\gamma Z)$, for some $\alpha, \beta, \gamma \in k$. If $\gamma =0$, then $P=Y^2+\alpha X^2+ \beta XY$. Since $X \in ker(D)$, it follows that $Y(Y+\beta X) \in ker(D)$, and hence $Y \in ker(D)$, as $ker(D)$ is factorially closed (\thref{prop}(i)). But this contradicts that $rank(D)=2$.
 	 Therefore, $\gamma \in k^{*}$, and with respect to the system of variables $\{X,Y, \alpha X+ \beta Y+\gamma Z\}$, we have $P=Y^2+XZ$.
	
	We now consider the case $d \geqslant 1$. By \thref{prop}(iii), $D$ extends to $D^{\prime} \in \LND \left(k(X)[Y,Z]\right)$ such that $ker(D^{\prime})=k(X)[P]$.
	Therefore, by \thref{np}, either $\deg_{Y}P\mid \deg_{Z}P$ or $\deg_{Z}P \mid \deg_{Y}P$ and $P$ is almost monic in $Z$ as a polynomial in $k(X)[Y,Z]$. 
	As $\deg_{Y}P > \deg_{Z}P$, we have $\deg_{Z}P \mid \deg_{Y}P$. 
	Again $\gcd(d+1,d+2)=1$ for $d \geqslant 1$. Therefore, expanding the expression of $P$ we get
	$$
	P= Y^{d+2}+ Xf_{d+1}(X,Y)+ Xf_{d}(X,Y)Z+ \dots + Xf_{i+2}(X,Y)Z^{d-i-1}+\beta_1 X^{i+2} Z^{d-i},
	$$
	such that $\beta_1 \in k^{*}$, $d-i \mid d+2$ for some $i$, where $0 \leqslant i \leqslant d-1$ and every polynomial $f_{j}(X,Y)$ is a homogeneous polynomials of degree $j$, where $i+2 \leqslant j \leqslant d+1$.
\end{proof}

The next lemma gives the structure of $P$ for irreducible, homogeneous triangularizable LNDs on $k^{[3]}$. Here we mention that in \cite[Corollary 5.2]{dtr09}, Daigle has shown that for $D$ to be a triangularizable LND on $k^{[3]}$, it is necessary and sufficient that a coordinate of $k^{[3]}$ is a local slice for $D$. The following \thref{tr} can be deduced from this result of Daigle. However, we give an independent proof using the definition of triangularizable derivations.  

\begin{lem}\thlabel{tr}
Let $D$ and $P$ be the same as in the paragraph $(*)$. Then $D$ is triangularizable if and only if there exists a system of variables $\{X,Y,Z\}$ which is linear in $\{U,V,W\}$ such that
	 $0=\deg_{D}(X) < \deg_{D}(Y) < \deg_{D}(Z)$ and $D=\gamma \Delta_{(X,P)}$ for some $\gamma \in k^{*}$, where 
	$$
	P= Y^{d+2}+ Xf_{d+1}(X,Y)+\beta X^{d+1}Z,
	$$ 
	 $f_{d+1}(X,Y)$ is a homogeneous polynomial of degree $d+1$ and $\beta \in k^{*}$.
	Moreover, $\deg_{D}(Y)=1$ and $\deg_{D}(Z)=d+2$.
\end{lem}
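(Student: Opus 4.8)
The plan is to prove both implications; the converse (``if'') direction is a short computation, while the forward (``only if'') direction carries all the content. For the ``if'' direction I would start from the assumed linear coordinate system $\{X,Y,Z\}$ with $D=\gamma\Delta_{(X,P)}$ and $P=Y^{d+2}+Xf_{d+1}(X,Y)+\beta X^{d+1}Z$, and simply compute, using $\Delta_{(X,P)}(g)=P_Yg_Z-P_Zg_Y$,
\[
DX=0\in k,\qquad DY=-\gamma\beta X^{d+1}\in k[X],\qquad DZ=\gamma\bigl((d+2)Y^{d+1}+X(f_{d+1})_Y(X,Y)\bigr)\in k[X,Y],
\]
which by \thref{def}(vii) exhibits $D$ as triangular with respect to $\{X,Y,Z\}$, hence triangularizable; these same formulas also yield the ``Moreover''.

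For the ``only if'' direction, assume $D$ is triangularizable. Since $D$ is homogeneous of degree $d$ for the weights $(1,1,1)$, \thref{htr} furnishes a \emph{linear} coordinate system $\{X,Y,Z\}$ of $k[U,V,W]$ with respect to which $D$ is triangular; homogeneity then forces $DX=0$, $DY=\lambda X^{d+1}$ and $DZ=\widetilde g(X,Y)$, where $\lambda\in k$ and $\widetilde g$ is homogeneous of degree $d+1$. I would first rule out $\lambda=0$: in that case $X,Y\in ker(D)$, and since $ker(D)$ has transcendence degree $2$ over $k$ (\thref{mthm}) and is algebraically closed in $B$ (\thref{prop}(i)), this forces $ker(D)=k[X,Y]$, whence $(DB)=(\widetilde g)$ is a proper principal ideal (as $\deg\widetilde g=d+1\geqslant1$), contradicting the irreducibility of $D$. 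Hence $\lambda\in k^{*}$, so $Y$ is a local slice and $0=\deg_{D}(X)<\deg_{D}(Y)=1$.

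Next, recall from the standing hypotheses that $ker(D)=k[U,P]$ with $P$ homogeneous of degree $d+2\geqslant2$; since $X\in ker(D)$ is homogeneous of degree $1$, a degree count in the monomial basis $\{U^iP^j\}$ of $k[U,P]$ forces $X\in kU$, whence $ker(D)=k[X,P]$, and \thref{dai} together with irreducibility of $D$ yields $D=\gamma\Delta_{(X,P)}$ for some $\gamma\in k^{*}$. Comparing $DY=-\gamma P_Z$ with $DY=\lambda X^{d+1}$ gives $P_Z=-(\lambda/\gamma)X^{d+1}$, so $P-\beta X^{d+1}Z$ (with $\beta:=-\lambda/\gamma\in k^{*}$) has vanishing $Z$-derivative and, being homogeneous, equals a homogeneous polynomial $h(X,Y)$ of degree $d+2$. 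The coefficient of $Y^{d+2}$ in $h$ must be nonzero: otherwise $X\mid h$, hence $X\mid P$, so $P/X\in ker(D)=k[X,P]$ since $ker(D)$ is factorially closed (\thref{prop}(i)); but $P/X$ has $B$-degree $d+1$ and involves $Z$, whereas the degree-$(d+1)$ homogeneous component of $k[X,P]$ is just $kX^{d+1}$ --- absurd. Replacing $P$ by a suitable nonzero scalar multiple (and $\gamma,\beta$ accordingly, which leaves $ker(D)=k[X,P]$ and $D=\gamma\Delta_{(X,P)}$ intact) we reach $P=Y^{d+2}+Xf_{d+1}(X,Y)+\beta X^{d+1}Z$. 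Finally, $DY=-\gamma\beta X^{d+1}\in ker(D)\setminus\{0\}$ gives $\deg_{D}(Y)=1$, and since $DZ=\gamma\bigl((d+2)Y^{d+1}+X(f_{d+1})_Y\bigr)$ lies in $k[X,Y]$ with the term $\gamma(d+2)Y^{d+1}$ (nonzero as $\operatorname{char}k=0$) strictly dominating in $\deg_{D}$-value, we get $\deg_{D}(DZ)=d+1$ and hence $\deg_{D}(Z)=d+2>\deg_{D}(Y)$.

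I expect the steps needing care to be: (i) observing that the variable $X$ furnished by \thref{htr} may be taken proportional to the kernel variable $U$ of the standing setup, so that $ker(D)=k[X,P]$ with no further change of coordinates; and (ii) the non-vanishing of the $Y^{d+2}$-coefficient, which combines factorial closedness of $ker(D)$ with a careful accounting of which $B$-degrees occur in $k[X,P]$. Neither is a deep obstacle, but both are where the argument could go wrong if handled carelessly; everything else is routine Jacobian and degree-function bookkeeping.
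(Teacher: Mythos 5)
Your proposal is correct and follows essentially the same route as the paper: invoke \thref{htr} to get a linear triangular coordinate system, identify $ker(D)=k[X,P]$, apply \thref{dai} to write $D=\gamma\Delta_{(X,P)}$, read off the form of $P$ from $P_Z$ being a constant multiple of $X^{d+1}$, and rule out the vanishing of the $Y^{d+2}$-coefficient (the paper does this via irreducibility of the kernel generator, you via factorial closedness plus a degree count --- equivalent arguments). Your explicit elimination of the case $\lambda=0$ is a small extra care the paper absorbs into the statement of \thref{htr}.
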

\begin{proof}
	It is easy to see that for the given structure of $P$ and $\gamma \in k^*$, $D=\gamma \Delta_{(X,P)}$ is triangularizable.
	
	Conversely, suppose $D$ is triangularizable.  
	%
	Since $D$ is homogeneous, by \thref{htr}, there exists a linear system of variables $\{X,Y,Z\}$ such that
	$$
	DX=0,\,\,DY=\mu  X^{d+1},\,\, DZ=g(X,Y),
	$$
	 for some $\mu \in k^{*}$ and a homogeneous polynomial $g$ of degree $d+1$. Therefore, $\deg_{D}(Y)=1$. 
	Now by \thref{mthm} and \thref{zu}, $ker(D)=k[X,P_{1}]$ for some homogeneous polynomial $P_1$. Since $D$ is irreducible, by \thref{dai},
	with respect to the coordinate system $\{X,Y,Z\}$, $D=\Delta_{(X,P_{1})}$, upto multiplication by a nonzero constant. Therefore, 
	$$
	DY=-\lambda(P_1)_Z={\mu} X^{d+1}
	$$ for some $\lambda \in k^{*}$, and hence
	$P_{1}= \lambda^{-1}(\widetilde{f}(X,Y)-\mu X^{d+1}Z),$  
	where $\widetilde{f}(X,Y)$ is a homogeneous polynomial of degree $d+2$.  Now using the fact that $P_{1}$ is irreducible, we get that $P_{1}$ is of the form  
	$$
	P_{1}=\alpha (Y^{d+2}+ Xf_{d+1}(X,Y) + \beta X^{d+1}Z)
	$$ 
	for some $\alpha, \beta \in k^{*}$ and a homogeneous polynomial $f_{d+1}(X,Y)$ of degree $d+1$. Therefore, we obtain that $D=\gamma \Delta_{(X,P)} $ for some $\gamma \in k^{*}$, and $ker(D)=k[X,P]$ where $P$ is in the desired form.
	
	Now
	$$
	DZ=\gamma P_{Y}= \gamma \left((d+2)Y^{d+1}+ X(f_{d+1})_Y \right).
	$$
	Since $X \in ker(D)$ and $\deg_{D}(Y)=1$, we have 
	$
	\deg_{D}(Z)= (d+1)\deg_{D}(Y)+1=d+2
	$.  
\end{proof}

\begin{rem}\thlabel{rk1}
	\em
	{Let $D$ be an irreducible homogeneous LND of rank $2$ on $B=k[U,V,W]$ such that $ker(D)=k[U,P]$. By \thref{sa}, we see there exists a linear system of variables $\{ X,Y,Z\}$ of $B$ such that
	\begin{equation}\label{a}
		0=\deg_{D}(X)< \deg_{D}(Y) < \deg_{D}(Z).
    \end{equation}
  Let $\overline{k}$ be an algebraic closure of $k$ and $D$ extends to $\overline{D} \in \LND(\overline{k}[X,Y,Z])$. If $\overline{D}$ is triangularizable, then by \thref{tr}, there exists a system of variables $\{X_{1},Y_{1},Z_{1}\}$ of $\overline{k}[X,Y,Z]$, which are linear in $X,Y,Z$ such that 
  \begin{equation}\label{b}
  	0=\deg_{\overline{D}}(X_{1})< \deg_{\overline{D}}(Y_{1})< \deg_{\overline{D}}(Z_{1}),
  \end{equation}
$ker(\overline{D})=\overline{k}[X_1,P]$, where $P= Y_{1}^{d+2}+ X_{1}f_{d+1}(X_{1},Y_{1})+\beta X_{1}^{d+1}Z_{1} \in \overline{k}[X_1,Y_1,Z_1]$, for some homogeneous polynomial $f_{d+1}(X_{1},Y_{1})$ and $\beta \in \overline{k}^{*}$. From \eqref{a} and \eqref{b}, it is easy to see that $X_{1}=a_{11}X,\, Y_{1}=a_{21}X+a_{22}Y$ and $Z_{1}=a_{31}X+a_{32}Y+a_{33}Z$, where every $a_{ij} \in \overline{k}$ and $a_{11},a_{22},a_{33} \neq 0$. Since $P \in k[X,Y,Z]$, upto multiplication by a non-zero constant in $k$, $P=Y^{d+2}+ Xg_{d+1}(X,Y)+\gamma X^{d+1}Z$, where $\gamma \in k^{*}$ and $g_{d+1}(X,Y)$ is a homogeneous polynomial in $k[X,Y]$ of degree $d+1$. Hence by \thref{tr}, $D$ must be triangularizable.
   }
\end{rem}

As an application of the above two Lemmas \ref{sb} and \ref{tr}, we get the following result.

\begin{cor}\thlabel{ctr}
	Let $p$ be a natural number and $D$ an irreducible homogeneous LND of rank $2$ and degree $p-2$ on $k[U,V,W]$. If $p$ is a prime, then $D$ is triangularizable. 
	
\end{cor}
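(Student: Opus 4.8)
The plan is to combine the structural results \thref{sb} and \thref{tr} with one elementary divisibility observation. By the standing conventions of $(*)$, after passing to a suitable linear system of coordinates $\{X,Y,Z\}$ of $k[U,V,W]$ we have $DX=0$, $ker(D)=k[X,P]$, $D=\gamma\,\Delta_{(X,P)}$ for some $\gamma\in k^{*}$, and $P$ is homogeneous of degree $d+2=p$. \thref{sb} then describes $P$ explicitly: if $p=2$ (i.e.\ $d=0$) then, up to a unit, $P=Y^{2}+XZ$; if $p\geqslant 3$ (i.e.\ $d\geqslant 1$) then, up to a unit,
\[
P=Y^{d+2}+Xf_{d+1}(X,Y)+Xf_{d}(X,Y)Z+\dots+Xf_{i+2}(X,Y)Z^{d-i-1}+\beta X^{i+2}Z^{d-i},
\]
with $0\leqslant i\leqslant d-1$, $\beta\in k^{*}$, and---this is the crucial point---$d-i$ a divisor of $d+2=p$.

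The heart of the argument is then purely arithmetic: since $p$ is prime and $1\leqslant d-i\leqslant d=p-2<p$, the only divisor of $p$ in this range is $d-i=1$, forcing $i=d-1$. Feeding this back into the displayed expression, every term carrying a positive power of $Z$ other than the final one disappears, and $P$ reduces to
\[
P=Y^{d+2}+Xf_{d+1}(X,Y)+\beta X^{d+1}Z.
\]
This is precisely the normal form of $P$ appearing in \thref{tr}; and the degenerate case $p=2$, $P=Y^{2}+XZ$, is the same normal form with $f_{d+1}=0$ and $\beta=1$. Since $D=\gamma\,\Delta_{(X,P)}$ with $\gamma\in k^{*}$, the (immediate) ``if'' direction of \thref{tr} then yields that $D$ is triangularizable, which completes the proof.

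I do not expect any genuine obstacle here: the whole content is already carried by \thref{sb} and \thref{tr}, and the corollary is just their combination with the remark that a prime $p$ admits no divisor strictly between $1$ and itself. The only care needed is the bookkeeping of the edge case $d=0$ (handled by \thref{sb}(i)) and noting that the hypothesis ``$D$ of degree $p-2$'' forces $p\geqslant 2$, so that $p=2$ together with $p\geqslant 3$ exhausts all primes. In particular, no algebraic-closedness assumption on $k$ is required, since \thref{sb} and \thref{tr} are valid over an arbitrary field of characteristic zero.
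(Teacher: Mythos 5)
Your proposal is correct and follows essentially the same route as the paper: invoke \thref{sb}, use primality of $p=d+2$ to force $d-i=1$ in the normal form, and conclude via the easy direction of \thref{tr}. The paper's own proof is just a terser version of this, leaving the divisibility bookkeeping (which you spell out correctly, including the $d=0$ case) implicit.
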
	
\begin{proof}
 Since $p$ is a prime, by \thref{sb} there exists a linear system of variables $\{X,Y,Z\}$ of $k[U,V,W]$ such that $D=\gamma \Delta_{(X,P)}$ and 
	$P=Y^{p}+Xf_{p-1}(X,Y)+\beta X^{p-1}Z,$ 
	where  $\gamma, \beta \in k^{*}$ and $f_{p-1}(X,Y)$ is homogeneous polynomial of degree $p-1$. Hence by \thref{tr}, $D$ is triangularizable.
\end{proof}

\begin{rem}\thlabel{r1}
	\em{In view of \thref{ctr}, it can be noticed that the smallest possible degree of a non-triangularizable irreducible homogeneous LND of rank $2$ on $k[U,V,W]$
	is $2$ which comes from the case ``$p$ is not a prime". The next theorem gives a structure of $P$ for such LNDs. Specifically,
	it establishes the structure of $P$ for irreducible homogeneous LNDs of rank $2$ and degree $pq-2$ on $k[U,V,W]$, where $p,q$ are prime numbers, not necessarily distinct.}
	
\end{rem}

\begin{thm}\thlabel{ntr}
	Let $k$ be an algebraically closed field and $p,q$ are prime numbers, not necessarily distinct. 
	Suppose $D$ and $P$ are as in the paragraph $(*)$ such that $\deg(D)=d=pq-2$. Then $D$ is not triangularizable if and only if there exists a system of variables $\{X,Y,Z\}$ linear in $\{U,V,W\}$ and a homogeneous polynomial $h(X,Y)$, monic in $Y$, such that $D= \gamma \Delta_{(X,P)}$ where $\gamma \in k^{*}$ and $P$ takes the following form where the roles of $p$ and $q$ are interchangeable:
	$$
	P=T^{p}+ c_{1}X^{q}T^{p-1}+\dots+c_iX^{iq}T^{p-i}+\dots+c_{p-1}X^{pq-q}T+  
	c_{p}X^{pq-1}Y,
	$$ 
	where $T=h(X,Y)+X^{q-1}Z$, $\deg(h(X,Y))=q$, $c_{i} \in k$\text{~for~}$i,1 \leqslant i \leqslant p$ and $c_{p} \neq 0$.
	Moreover, $\deg_{D}(Y)=p$, $\deg_{D}(Z)=pq$ and $T$ is a local slice for $D$.
\end{thm}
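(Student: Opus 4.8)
The plan is to build on Lemmas \ref{sb} and \ref{tr} and the extension-of-scalars machinery (\thref{prop}(iv), \thref{irr}, \thref{rk1}), working over the algebraically closed field $k$ throughout. For the ``if'' direction, assume $P$ has the stated form with $T = h(X,Y) + X^{q-1}Z$ and the $c_i$ as described. One computes $D = \gamma\Delta_{(X,P)}$ directly: $DX = 0$, $DY = -\gamma P_W$, $DZ = \gamma P_V$ in the original coordinates, but it is cleaner to pass to the variables $\{X,Y,Z\}$ and observe that $T$ is a local slice, since $DT = \partial_Z(T)\cdot DZ + \cdots$ reduces (because $P \in k[X,T,Y]$ with $T$ linear in $Z$) to something in $k[X,Y]$, actually in $\ker D$ up to the structure of $P$. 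Concretely, $P$ as a polynomial in $X, Y, T$ lies in $k[X]^{[1]} \cdot T + \cdots$; since $X \in \ker D$ and $P \in \ker D$, and $T$ involves $Z$ linearly with coefficient $X^{q-1}$, the derivation acts on the triple $(X, Y, T)$ and one checks it is precisely the triangular-looking derivation on $k[X,Y,T]$ — but $k[X,Y,T] \subsetneq k[X,Y,Z]$ is NOT all of the polynomial ring (the inclusion is proper unless $q=1$), which is exactly the obstruction to triangularizability. To show $D$ is genuinely non-triangularizable, I would invoke \thref{rk1}: if $D$ were triangularizable then $P$ would have the restrictive form $Y^{d+2} + Xg_{d+1}(X,Y) + \gamma X^{d+1}Z$ of Lemma \ref{tr}, i.e. $Z$ would appear with coefficient a unit times $X^{d+1} = X^{pq-1}$; but in the given $P$ the variable $Z$ appears only through powers of $T$, so the coefficient of $Z$ (extracting the linear-in-$Z$ part) is $c_p X^{pq-1}\cdot(\text{stuff}) + \text{lower-order-in-}X$ contributions from $T^p, T^{p-1},\dots$, and one checks this coefficient is \emph{not} a scalar multiple of $X^{d+1}$ (it has strictly smaller $X$-degree or extra $Y$-dependence), contradicting \thref{tr}. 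That comparison of coefficients of $Z$ is the cleanest route to non-triangularizability.

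For the ``only if'' direction — the substantial part — start from Lemma \ref{sb}(ii) with $d = pq-2$: there is a linear system $\{X,Y,Z\}$ with $0 = \deg_D(X) < \deg_D(Y) < \deg_D(Z)$ and
$$
P = Y^{d+2} + Xf_{d+1}(X,Y) + Xf_d(X,Y)Z + \cdots + Xf_{i+2}(X,Y)Z^{d-i-1} + \beta X^{i+2}Z^{d-i},
$$
where $0 \le i \le d-1$, $(d-i) \mid (d+2)$, $\beta \in k^*$. Here $d+2 = pq$, so the divisor $d-i$ of $pq$ is one of $1, p, q, pq$. If $d-i = pq$ then $i = -2 < 0$, impossible; if $d - i = 1$ then $i = d-1$ and $P = Y^{d+2} + Xf_{d+1}(X,Y) + \beta X^{d+1}Z$, which by Lemma \ref{tr} forces $D$ triangularizable, contrary to hypothesis. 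So $d - i \in \{p, q\}$; by the symmetry in $p,q$ asserted in the statement, assume $d - i = q$, i.e. $i = pq - 2 - q = q(p-1) - 2$, and the top term is $\beta X^{q(p-1)} Z^{q}$. The goal is now to show the whole of $P$ organizes into powers of a single $T = h(X,Y) + X^{q-1}Z$.

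The key step is to exploit the Newton-polygon rigidity more tightly. Extending to $D' \in \LND(k(X)[Y,Z])$ with $\ker D' = k(X)[P]$ (by \thref{prop}(iii)), \thref{np} says $\mathrm{Newt}_{\mathbb{Z}^2}(P)$ over $k(X)$ is the triangle with vertices $(0,0), (d+2, 0), (0, q)$ (the $Y$-degree is $d+2 = pq$, the $Z$-degree is $d - i = q$, and $q \mid pq$). By \thref{grnp} applied over $A = k(X)$ — more precisely its analogue, reading off the shape dictated by the triangle — $P$ as a polynomial in $k(X)[Y,Z]$ has the form $\tilde P(Y) + \sum_{j=1}^{q-1}(\cdots)Z^j + \alpha Z^q$ with the $Z^j$-coefficient of $Y$-degree $\le pq - jp$. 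Thus in $P$, the coefficient of $Z^j$ is $X$-something times a polynomial of $Y$-degree exactly (generically) $p(q-j)$, and a degree count in the standard grading pins the $X$-power on $Z^j$ to be $X^{(q-j)(q-1)}\cdot(\text{homog. deg } \le \text{bound})$. This is precisely the binomial pattern of $(h(X,Y) + X^{q-1}Z)^q$ with $\deg h = q$. One then argues that $Q := $ (the unique homogeneous degree-$pq$ polynomial whose leading $Z$-behavior matches, namely $T^{p}$ with appropriate $h$) can be chosen so that $P - Q$ has strictly lower $Z$-degree, and iterates: the residue lies in $k[X,T,Y]$ and, being in $\ker D' $, must again be a $k(X)$-polynomial in $P$, forcing it to be a $k[X]$-combination $c_1 X^{q}T^{p-1} + \cdots + c_p X^{pq-1}Y$ by matching degrees in the $(1,1,1)$-grading (each $X^{iq}T^{p-i}$ has degree $iq + q(p-i) = pq$ ✓, and the final term $X^{pq-1}Y$ has degree $pq$ ✓, while $c_p \ne 0$ because $Y$ must actually appear — else $Y \mid P$ or $P \in k[X,T]$, contradicting rank $2$ via factorial closedness of $\ker D$, as in the $d=0$ argument in Lemma \ref{sb}). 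Finally, $\deg_D(Y) = p$: since $T$ is a local slice, $\deg_D(T) = 1$, and from $P \in k[X,T,Y]$ with $P = T^p + \cdots + c_p X^{pq-1}Y$ one gets $0 = \deg_D(P) \Rightarrow$ reading the filtration, actually $\deg_D$ of the leading term $T^p$ is $p$, forcing $\deg_D(Y) = p$ to balance the $c_pX^{pq-1}Y$ term (recall $X \in \ker D$); then $\deg_D(Z) = \deg_D(T - h(X,Y))$: since $\deg_D(h(X,Y)) = p \cdot \deg_{Y}h \le pq$ — more carefully $h$ has $Y$-degree $q$ so $\deg_D(h) = pq$ — and $\deg_D(X^{q-1}Z) = \deg_D(Z)$, and these cannot cancel in $DT \ne 0$... one concludes $\deg_D(Z) = pq$ directly from $\deg_D(Y) = p$ and a final degree bookkeeping on $DZ = \gamma P_V$.

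The main obstacle I anticipate is the middle step: rigorously extracting from \thref{np}/\thref{grnp} that \emph{every} $Z^j$-coefficient of $P$ (not merely the extreme ones) carries exactly the $X$-power $X^{(q-j)(q-1)}$ up to lower order, and that the resulting constraints force $P$ to be literally a polynomial in the single element $T = h(X,Y) + X^{q-1}Z$ rather than something merely having the same Newton polygon. This likely requires a descending induction on $\deg_Z$: subtract off the correct power of $T$, check the remainder still lies in $\ker D'$ (automatic, since $\ker D'$ is a subring containing both $P$ and $X$, hence $T$ once one knows $T \in \ker D'$ — wait, $T \notin \ker D'$ in general, so instead one works in the graded ring or argues that the remainder, having smaller $Z$-degree and lying in the $2$-generated kernel $k(X)[P]$, must be a $k(X)$-polynomial in $P$ of smaller $Z$-degree, hence of strictly smaller $Y$-degree, and recurse). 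Handling the interaction of the two filtrations/gradings — the $(1,1,1)$-grading on $B$ and the $\deg_D$-filtration — cleanly enough to close this induction is where the real care is needed; everything else is degree bookkeeping and appeals to factorial closedness of the kernel.
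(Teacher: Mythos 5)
Your proposal correctly identifies the entry point (apply \thref{sb} with $d+2=pq$, rule out $d-i=pq$ as impossible and $d-i=1$ via \thref{tr}, leaving $d-i\in\{p,q\}$) and the correct target shape of $P$, but the crucial middle step --- showing that $P$ is \emph{literally} a polynomial in a single element $T=h(X,Y)+X^{q-1}Z$ rather than merely having a Newton polygon compatible with one --- is not established, and the recursion you sketch to close it does not work. You propose to subtract the leading power of $T$ and argue that ``the remainder, having smaller $Z$-degree and lying in the $2$-generated kernel $k(X)[P]$, must be a $k(X)$-polynomial in $P$''; but the remainder $P-T^{p}$ is \emph{not} in $\ker(\widetilde D)=k(X)[P]$ (indeed the only elements of $k(X)[P]$ of $Z$-degree smaller than $\deg_Z P$ are constants in $k(X)$), and $T$ itself is a local slice, not a kernel element, so no subring argument applies. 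The paper's mechanism is different: one puts a sequence of $\mathbb{Z}$-gradings $gr_j$ on $k(X)[Y,Z_j]$ (with $gr_1(Y)=1$, $gr_1(Z)=q$, etc.); by \thref{dtame} the derivation respects each induced filtration, by \thref{gd} the associated graded derivation $gr_j(\widetilde D)$ is a \emph{nonzero} LND whose kernel contains the top graded form $P_j$ of $P$; since $k$ is algebraically closed $P_j$ factors into linear-in-$Z_j$ factors, and factorial closedness of $\ker(gr_j(\widetilde D))$ (\thref{prop}(i)) forces all factors to coincide, so $P_j$ is a perfect $p$-th power $(Z_j+\lambda X^{q-r}Y^{r})^{p}$. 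This is what lets one update $Z_j\mapsto Z_{j+1}$ and strictly decrease $\deg_Y$ of the residual coefficient until \thref{np} forces it to equal $1$. That associated-graded argument is the missing idea; you flag the obstacle yourself but do not supply a valid substitute. (There is also an index slip: with top term $\beta X^{q(p-1)}Z^{q}$ the correct conclusion is $P=T^{q}+\cdots$ with $T=h(X,Y)+X^{p-1}Z$, $\deg h=p$, not $(h+X^{q-1}Z)^{q}$ with $\deg h=q$.)

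A secondary issue is your argument for the ``if'' direction. Showing that the given $P$, \emph{in the given coordinates}, does not have the shape of \thref{tr} proves nothing: \thref{tr} asserts the existence of \emph{some} linear coordinate system realizing that shape, so non-triangularizability requires ruling out all of them. The clean route, which your final degree bookkeeping almost reaches, is to use the ``Moreover'' clause: from the stated form of $P$ one computes $\deg_D(Y)=p$ and $\deg_D(Z)=pq$, so every linear form in $X,Y,Z$ has $\deg_D\in\{0,p,pq\}$ and in particular no linear variable is a local slice; since \thref{tr} forces a triangularizing linear system to contain a variable of $\deg_D$-value $1$, $D$ cannot be triangularizable. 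I would also note that your justification of $c_p\neq 0$ (``else $Y\mid P$ or $P\in k[X,T]$'') needs to be completed: the case $P\in k[X,T]$ must be excluded by showing it would force $T\in\ker(\widetilde D)$ (algebraic closedness of the kernel) and hence collapse the rank, not merely by an appeal to factorial closedness.
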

\begin{proof}
	Suppose $D$ is not triangularizable. By \thref{sb} and \thref{tr}, there exists a system of variables $\{X,Y,Z\}$ which are linear in $\{U,V,W\}$ such that 
	$D=\gamma \Delta_{(X,P)}$, 
	where $\gamma \in k^{*}$ and $P$ has either of the following forms:
	\begin{equation}\label{14}
		P= Y^{pq}+Xf_{pq-1}(X,Y)+XZf_{pq-2}(X,Y)+\dots+ XZ^{i-1}f_{pq-i}(X,Y)+\dots+\beta X^{pq-p}Z^{p}
	\end{equation}
	where $0=\deg_{D}(X) < \deg_{D}(Y) < \deg_{D}(Z)$, $\beta \in k^{*}$ and $f_{pq-i}(X,Y)$ is a homogeneous polynomial of degree $pq-i$ for $1 \leqslant i \leqslant p$ or
	
		\begin{equation}\label{q}
		P= Y^{pq}+X\widetilde{f}_{pq-1}(X,Y)+XZ\widetilde{f}_{pq-2}(X,Y)+\dots+XZ^{i-1}\widetilde{f}_{pq-i}(X,Y)+\dots+\widetilde{\beta} X^{pq-q}Z^{q}
	\end{equation}
	where $0=\deg_{D}(X) < \deg_{D}(Y) < \deg_{D}(Z)$, $\widetilde{\beta} \in k^{*}$ and $\widetilde{f}_{pq-i}(X,Y)$ is a homogeneous polynomial of degree $pq-i$ for $1 \leqslant i \leqslant q$. 
	Therefore without loss of generality we take $P$ as in \eqref{14} and proceed. 
	
	Now $D$ extends to an LND $\widetilde{D}$ of $k(X)[Y,Z]$  and $P \in ker(\widetilde{D})$ (cf. \thref{prop}(iii)).
By \thref{grnp}, $\deg_{Y}(f_{pq-i}(X,Y)) \leqslant pq-(i-1)q$.
 We consider the following grading on $k(X)[Y,Z]$:
$$
gr_{1}(Y)=1,\,\,\,\, gr_{1}(Z)=q.
$$
 Therefore, if $P_1$ denotes the highest degree homogeneous summand of $P$ with respect to $gr_1$, then
	$$
	P_{1}=Y^{pq}+\gamma_{1}Y^{pq-q}(X^{q-1}Z)+\dots+\gamma_{p-1}Y^{q}(X^{q-1}Z)^{p-1}+\beta(X^{q-1}Z)^p,
	$$
	where $\gamma_{i} \in k$, for $ 1 \leqslant i \leqslant p-1$.
	As $k$ is algebraically closed, we have 
	$$
	P_{1}=\prod_{i=1}^{p}(Y^{q}+\alpha_{i}X^{q-1}Z),
	$$
	where $\alpha_{i} \in k$, for $ 1 \leqslant i \leqslant p$. By \thref{dtame}, $gr_1(\widetilde{D}) \in \LND(k(X)[Y,Z])$, and
	by \thref{gd}, $P_1 \in ker(gr_1(\widetilde{D}))$. Now as $ker(gr_1(\widetilde{D}))$ is factorially closed (cf. \thref{prop}(i)), if there exist $i,j$ such that $\alpha_{i} \neq \alpha_{j}$, then it follows that $Y,Z\in ker(gr_{1}(\widetilde{D}))$. But then $gr_1(\widetilde{D})=0$ which is a contradiction (cf. \thref{gd}). 
	Therefore, we have $P_{1}=(Y^{q}+\alpha X^{q-1}Z)^{p}$, where $\alpha_{i}=\alpha \in k^{*}$ for every $i, 1 \leqslant i \leqslant p$. 
	We now rename $\alpha Z$ as $Z$. 
	For $Z_{1}=(Y^{q}+X^{q-1}Z)$, as $k(X)[Y,Z]=k(X)[Y,Z_1]$, we have the following form of $P$. 
	\begin{equation}\label{15}
		P=Z_{1}^{p}+XZ_{1}^{p-1}g^1_{q-1}(X,Y)+\dots+XZ_{1}^{p-j}g^1_{jq-1}(X,Y)+\dots+
		Xg^1_{pq-1}(X,Y),
	\end{equation}
	where $g^1_{jq-1}(X,Y)$ is a homogeneous polynomial of degree $jq-1$, $ 1 \leqslant j \leqslant p$ and $g^1_{pq-1}(X,Y) \neq 0$.
		
		Since $k(X)[Y,Z]=k(X)[Y,Z_{1}]$, if $\deg_{Y}(g^1_{pq-1}(X,Y)) \geqslant p$, then $p \mid \deg_{Y}(g^1_{pq-1}(X,Y)) $ (cf. \thref{np}). Therefore, $\deg_{Y}(g^1_{pq-1}(X,Y))=rp$ for some $r$, where $1 \leqslant r < q$. 
		By \thref{grnp}, $\deg_{Y}(g^1_{jq-1}(X,Y)) \leqslant rp-(p-j)r=jr$, for $ 1 \leqslant j \leqslant p$. 
		We now consider the following grading on $k(X)[Y,Z_{1}]$:
		$$
		gr_{2}(Y)=1,~~ gr_{2}(Z_{1})=r. 
		$$
		If $P_2$ denotes the highest degree homogeneous summand of $P$ with respect to $gr_2$, then by the similar arguments used for $P_{1}$, we get that $P_2=Z_2^p$ where $Z_{2}=(Z_1+\lambda X^{q-r}Y^r)=(Y^q+\lambda X^{q-r}Y^r +X^{q-1}Z)$ for some $\lambda \in k$. Hence 
	$$	
	P=Z_{2}^{p}+XZ_{2}^{p-1}g^2_{q-1}(X,Y)+\dots+XZ_2^{p-j}g^2_{jq-1}(X,Y)+\dots+
	Xg^2_{pq-1}(X,Y),
	$$ 
	where $g^2_{jq-1}(X,Y)$ is a homogeneous polynomial of degree $jq-1$, for $ 1 \leqslant j \leqslant p$, $g^2_{pq-1}(X,Y) \neq 0$ and $\deg_{Y}(g^2_{pq-1}(X,Y))< rp$. 
	
	Since $k(X)[Y,Z]=k(X)[Y,Z_{2}]$, if $\deg_{Y}(g^2_{pq-1}(X,Y)) \geqslant p$, then we can repeat the above process until we get the following form of $P$: 
	\begin{equation}\label{16}
		P=T^{p}+XT^{p-1}\widetilde{g}_{q-1}(X,Y)+\dots+XT^{p-j}\widetilde{g}_{jq-1}(X,Y)+\dots+
		 X\widetilde{g}_{pq-1}(X,Y),
	\end{equation}
	where $T=(h(X,Y)+X^{q-1}Z)$ for some homogeneous polynomial $h(X,Y)$ of degree $q$ which is monic in $Y$, $\widetilde{g}_{jq-1}(X,Y)$ is homogeneous polynomial of degree $jq-1$, for $ 1 \leqslant j \leqslant p$, $\widetilde{g}_{pq-1}(X,Y) \neq 0$ and $\deg_{Y}(\widetilde{g}_{pq-1}(X,Y))<p.$
  
   Now, since $k(X)[Y,T]=k(X)[Y,Z]$, and $\deg_{Y}(\widetilde{g}_{pq-1}(X,Y))<p$, by \thref{np} $\deg_{Y}(\widetilde{g}_{pq-1}(X,Y)) \mid p$, and therefore, $\deg_{Y}(\widetilde{g}_{pq-1}(X,Y))=1$.
	Hence we see that $P$ has the following form:
	\begin{equation}\label{17}
		P=T^{p}+ c_{1}X^{q}T^{p-1}+\dots+ c_iX^{iq}T^{p-i}+\dots+
		c_{p-1}X^{pq-q}T+ c_{p} X^{pq-1}Y
	\end{equation}
	where $c_{i} \in k$ for $i, 1 \leqslant i \leqslant p$ and $c_{p} \neq 0$ (cf. \thref{grnp}).
	Thus we have the desired form of $P$.
	\medskip
	
	Next, we investigate the $\deg_{D}$-values of $Y$ and $Z$. Note that
	$DY=-\gamma P_{Z}$ and $DZ=\gamma P_{Y}$.
	Since $ker(D)=k[X,P]$, $DY$ or $DZ$ can be in $ker(D)$ only if $P_Z$ or $P_Y$ is a polynomial entirely in $X$. But
	 from the expression of $P$ it is clear that this is not possible. Hence $DY$ and $DZ$ are not in $ker(D)$ and therefore, $\deg_{D}(Y) > 1$ and $\deg_{D}(Z)>1$. Now it is easy to check that
	$$
	DT=D(h(X,Y)+X^{q-1}Z)=\gamma \left(-h_{Y}P_{Z}+X^{q-1}P_{Y}\right)=\gamma c_{p}X^{pq+q-2}.
	$$ 
	Hence, $T=(h(X,Y)+X^{q-1}Z)$ is a local slice of $D$. Since $\deg_{D}(h(X,Y)+X^{q-1}Z)=1$ and $h(X,Y)$ is monic in $Y$, we have $\deg_{D}(Z)=\deg_{D}(h(X,Y))=q.\deg_{D}(Y)$. Since $\deg_{D}(P)=0$, and both $\deg_{D}(T)$ and $\deg_{D}(Y)$ are non-zero, from \eqref{17} we have $\deg_{D}(Y)=p.\deg_{D}(T)$. Hence, $\deg_{D}(Y)=p$ and $\deg_{D}(Z)=pq.$
\end{proof}

\section{An application: Finding generators of image ideals}

In this section we discuss an application of our main results. More precisely, we shall
use \thref{tr} and \thref{ntr} to find generators of the image ideals $I_n$'s of homogeneous triangularizable LNDs and irreducible homogeneous non-triangularizable LNDs of rank two and degree $pq-2$ on $k^{[3]}$, where $p,q$ are prime numbers.

\subsection{Definitions and preliminary results}

We start with some properties of the degree module $\mathscr{F}_{n}$ corresponding to an LND $D$ on $B=k[X,Y,Z]=k^{[3]}$. 
These properties have been described in \cite{Gfact}.

We first fix a few notation for the rest of this subsection. Let $r\in B$ be a local slice for $D$ and $Dr=f$, where $f \in A:=ker(D)$. Consider the following $A$ submodule $M$ of 
$\mathscr{F}_{n}$:
$$
M= \sum_{i=0}^{n}A.r^n.
$$
Let $M_{0}$ be an $A$-module such that $M \subseteq M_{0} \subseteq \mathscr{F}_{n}$ and 
$
M_{i}=\{b \in B \,\,|\,\, fb \in M_{i-1}\}
$ for every
$i \geqslant 1$. The following theorem (\cite[Theorem 9]{Gfact}) of Freudenburg gives the structure of the $n$-th degree module $\mathscr{F}_{n}$.

\begin{thm}\thlabel{ft}
	Let $s$ be a non-negative integer. Then with respect to the above notation, the following conditions are equivalent:
	\begin{enumerate}
		\item[\rm(a)]  $fB \cap M_{s}=fM_{s}$.
		\item[\rm(b)] $\mathscr{F}_{n}=M_{s}$.
	\end{enumerate}
\end{thm}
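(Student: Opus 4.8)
The plan is to identify the $n$-th degree module $\mathscr{F}_n$ with the increasing union of the modules $M_i$, after which the equivalence falls out of an elementary cancellation in the domain $B$. First I would pin down $\mathscr{F}_n$ concretely; write $B_f$, $A_f$ for the localizations at the powers of $f$. Since $r$ is a local slice we have $Dr=f\in A\setminus\{0\}$ and $D^2r=Df=0$, so by \thref{prop}(ii) $B_f=A_f[r]=A_f^{[1]}$, and by \thref{prop}(iii) the induced derivation $D_f$ on $B_f$ has kernel $A_f$; hence $D_f$ is the $A_f$-derivation with $D_f(r)=f$ (namely $f\,\partial/\partial r$), so $\deg_{D_f}$ of an element of $A_f[r]$ is just its degree in $r$. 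As $B\hookrightarrow B_f$ and $B$ is a domain, $D^{n+1}b=0$ in $B$ iff it is $0$ in $B_f$, so $\deg_{D_f}$ restricts to $\deg_D$ on $B$, and therefore $\mathscr{F}_n=B\cap\big(\bigoplus_{l=0}^n A_f\,r^l\big)$. Clearing a common denominator $f^j$ then gives the reformulation that $b\in\mathscr{F}_n$ if and only if $f^j b\in M$ for some $j\geqslant 0$, where $M=\sum_{l=0}^n A\,r^l$.

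Next I would check that $\mathscr{F}_n=\bigcup_{i\geqslant 0}M_i$, the $M_i$ forming an increasing chain of $A$-modules with $M_i=(M_0:f^i):=\{b\in B \mid f^i b\in M_0\}$. Indeed, since $M_0$ is an $A$-module and $f\in A$ we have $fM_0\subseteq M_0$, i.e. $M_0\subseteq(M_0:f)=M_1$; iterating $(\,\cdot:f\,)$ gives $M_0\subseteq M_1\subseteq M_2\subseteq\cdots$. If $b\in\mathscr{F}_n$ then $f^j b\in M\subseteq M_0$ for some $j$ by the previous step, so $b\in M_j$; conversely if $b\in M_i$ then $f^i b\in M_0\subseteq\mathscr{F}_n$, so $f^i D^{n+1}b=D^{n+1}(f^i b)=0$ (as $f^i\in\ker D$), whence $D^{n+1}b=0$ and $b\in\mathscr{F}_n$. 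Thus $\mathscr{F}_n=\bigcup_i M_i$.

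With this in hand the two implications are short. For (b)$\Rightarrow$(a): $fM_s\subseteq fB\cap M_s$ is clear, and if $x=fb\in fB\cap M_s=fB\cap\mathscr{F}_n$ then $fD^{n+1}b=D^{n+1}(fb)=0$, so $b\in\mathscr{F}_n=M_s$ and $x\in fM_s$. For (a)$\Rightarrow$(b): it suffices to prove $M_{s+1}=M_s$, since then $(M_s:f)=M_s$, hence $(M_s:f^i)=M_s$ and $M_{s+i}=M_s$ for all $i$, giving $\mathscr{F}_n=\bigcup_i M_i=M_s$. And indeed, if $b\in M_{s+1}=(M_s:f)$ then $fb\in fB\cap M_s=fM_s$ by (a), say $fb=fb'$ with $b'\in M_s$, and cancelling $f$ in the domain $B$ gives $b=b'\in M_s$.

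The only ingredient that is not pure bookkeeping is the identification $\mathscr{F}_n=B\cap\bigoplus_{l=0}^n A_f\,r^l$ via \thref{prop}; so I do not anticipate a real obstacle, the only point needing care being that the $\deg_D$-filtration of $B$ and the $r$-degree filtration of $A_f[r]$ agree on $B$, and that clearing denominators faithfully turns ``$\deg_D b\leqslant n$'' into ``$f^j b\in M$ for some $j$''. Everything after that is manipulation of the $A$-module chain $(M_i)$ and cancellation in a domain.
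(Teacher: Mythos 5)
The paper does not actually prove \thref{ft}: it is quoted verbatim as Theorem~9 of Freudenburg's paper \cite{Gfact}, so there is no in-paper proof to compare against. Your argument is correct and is essentially the standard proof of that result. The two pillars both check out: (1) since $r$ is a local slice, \thref{prop}(ii)--(iii) give $B_f=A_f[r]$ with induced derivation $f\,\partial/\partial r$, so $\deg_D$ on $B$ agrees with $r$-degree in $A_f[r]$, and clearing denominators yields $\mathscr{F}_n=\{b\in B : f^jb\in M \text{ for some } j\}=\bigcup_{i\geqslant 0}M_i$ with $M_i=(M_0:f^i)$ an increasing chain of $A$-modules; (2) condition (a) is exactly the statement $M_{s+1}=M_s$ (using cancellation of $f$ in the domain $B$), which forces the chain to stabilize at $M_s$, while conversely $M_s=\mathscr{F}_n$ gives $fB\cap M_s\subseteq fM_s$ because $\deg_D(fb)=\deg_D(b)$ for $f\in\ker D$. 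The only points requiring care --- that $\deg_{D_f}$ restricts to $\deg_D$ on $B$, that each $M_i$ is an $A$-module so that $fM_s\subseteq M_s$, and that $(M_s:f)=M_s$ propagates to $(M_s:f^i)=M_s$ --- are all handled. I see no gap.
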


We now recall the definition of a $D$-set and a $D$-basis (\cite[Definition 1]{Gfact}).
\begin{definition}\thlabel{dbasis}
	\em{A subset $S$ of $B$ is said to be a {\it $D$-set}, if $\deg_{D}$ values of the elements of $S$ are distinct. 
		Let $F$ be a free $A$-submodule of $B$. A basis for $F$ is said to a {\it $D$-basis}, if that is a $D$-set.}
\end{definition}

The following lemma (\cite[Lemma 3]{Gfact}) by Freudenburg gives a condition for freeness of an $A$-submodule of $B=k[X,Y,Z]$.

\begin{lem}\thlabel{fl}
	Let $M$ be an $A$-submodule of $B$ generated by $\{m_{i}\mid 1 \leqslant i \leqslant n\} \subset M$. Suppose there exists $h \in B$ such that $\deg_{D}(m_{i}) < \deg_{D}(h)$ for  $1 \leqslant i \leqslant n$.
	Then, for the $A$-module $M^{\prime}= \sum_{i \geqslant 0}Mh^i$, the following properties hold.
	\begin{enumerate}
		\item [\rm(a)] $M^{\prime}= \bigoplus_{i \geqslant 0}Mh^i$
		\item[\rm(b)] If $M$ is a free $A$-module with $D$-basis $\{b_{1},\cdots,b_{n}\}$, then $M^{\prime}$ is a free $A$-module with a $D$-basis of the form $\{b_{i}h^j\,\,| 1 \leqslant i \leqslant n, j \geqslant 0\}$
	\end{enumerate} 
\end{lem}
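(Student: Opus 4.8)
Here is how I would approach it.

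\medskip

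The plan is to prove parts (a) and (b) together by exploiting the key hypothesis $\deg_D(m_i) < \deg_D(h)$ for all $i$, which forces a kind of ``strict dominance'' of powers of $h$ over the generators of $M$. First I would recall that $\deg_D$ is a degree function: for $b, b' \in B$ nonzero, $\deg_D(bb') = \deg_D(b) + \deg_D(b')$ (this holds because $A = \ker D$ is a domain, so the leading terms of the formal expansions do not cancel — this is the standard multiplicativity of $\deg_D$). In particular, for any nonzero $a \in A$ and nonzero $m \in M$ with $\deg_D(m) \leqslant \max_i \deg_D(m_i) =: \delta$, and any $j \geqslant 0$, we get $\deg_D(a m h^j) = \deg_D(am) + j\deg_D(h)$. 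The crucial numerical point: since $\deg_D(h) > \delta \geqslant \deg_D(am)$, the quantity $j\deg_D(h)$ already determines $j$ from $\deg_D(amh^j)$; more precisely, elements coming from $Mh^j$ have $\deg_D$ strictly less than anything coming from $Mh^{j'}$ for $j' > j$ — indeed an element of $Mh^j$ has degree $\leqslant \delta + j\deg_D(h)$ while a nonzero element of $Mh^{j'}$ has degree $\geqslant j'\deg_D(h) > \delta + (j'-1)\deg_D(h) \geqslant \delta + j\deg_D(h)$ when $j' \geqslant j+1$.

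\medskip

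For part (a), I would show the sum $M' = \sum_{i \geqslant 0} Mh^i$ is direct. Suppose $\sum_{i=0}^{N} w_i h^i = 0$ with $w_i \in M$, not all zero; let $j$ be the largest index with $w_j \neq 0$. If $j = 0$ this is immediate; if $j \geqslant 1$, apply $\deg_D$: the term $w_j h^j$ has $\deg_D(w_j h^j) = \deg_D(w_j) + j\deg_D(h)$, and by the degree estimate above this strictly exceeds $\deg_D(w_i h^i)$ for every $i < j$ with $w_i \neq 0$. Hence $\deg_D$ of the whole sum equals $\deg_D(w_j h^j) \neq -\infty$, contradicting that the sum is $0$. (Here I use that a degree function satisfies $\deg_D(a+b) = \max(\deg_D a, \deg_D b)$ when the two degrees differ.) This gives $M' = \bigoplus_{i \geqslant 0} Mh^i$.

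\medskip

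For part (b), assume $M$ is free over $A$ with $D$-basis $\{b_1,\dots,b_n\}$ — so the $\deg_D(b_i)$ are pairwise distinct and the $b_i$ are an $A$-basis of $M$. I claim $\{b_i h^j : 1 \leqslant i \leqslant n,\ j \geqslant 0\}$ is a $D$-basis of $M'$. That it spans $M'$ over $A$ is clear from part (a) since each $b_i$ spans $M$. Linear independence over $A$: from the direct sum decomposition it suffices to check, for each fixed $j$, that $\{b_i h^j : 1 \leqslant i \leqslant n\}$ is $A$-independent in $Mh^j$; multiplication by $h^j$ is injective (as $B$ is a domain), so this follows from $A$-independence of the $b_i$ in $M$. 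Finally, to see it is a $D$-set I must check the values $\deg_D(b_i h^j) = \deg_D(b_i) + j\deg_D(h)$ are pairwise distinct as $(i,j)$ ranges. For fixed $j$ they differ because the $\deg_D(b_i)$ are distinct; for $j \neq j'$, say $j < j'$, pick any $i, i'$: since $\deg_D(b_i), \deg_D(b_{i'}) \leqslant \delta < \deg_D(h)$, we get $\deg_D(b_i) + j\deg_D(h) \leqslant \delta + j\deg_D(h) < (j+1)\deg_D(h) \leqslant j'\deg_D(h) \leqslant \deg_D(b_{i'}) + j'\deg_D(h)$, so the two values are distinct. This completes the argument.

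\medskip

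\textbf{Main obstacle.} The only real subtlety is making the multiplicativity and non-cancellation properties of the degree function $\deg_D$ fully rigorous — i.e., that $\deg_D(bb') = \deg_D b + \deg_D b'$ and $\deg_D(b+b') = \max(\deg_D b, \deg_D b')$ when $\deg_D b \neq \deg_D b'$. These are standard facts (they follow from passing to the associated graded ring $\gr(B)$ with respect to the filtration $\{\mathscr{F}_n\}$, which is a domain because $A$ is), so I would simply cite \cite{GFB}; everything else is bookkeeping with the strict inequality $\deg_D(m_i) < \deg_D(h)$.
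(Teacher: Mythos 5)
Your proof is correct. The paper itself gives no proof of this lemma --- it is quoted verbatim from Freudenburg (\cite[Lemma 3]{Gfact}) --- and your argument (bounding $\deg_D$ on $M$ by $\delta=\max_i\deg_D(m_i)$, using $\delta<\deg_D(h)$ to separate the $\deg_D$-ranges of the blocks $Mh^j$, and then invoking multiplicativity of $\deg_D$ and $\deg_D(a+b)=\max(\deg_D a,\deg_D b)$ when the degrees differ) is exactly the standard argument behind the cited result, so there is nothing to correct.
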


%

We now state the Quillen-Suslin theorem. 
\begin{thm}\thlabel{qs}
	Every finitely generated projective module over $k^{[n]}$ is free.
\end{thm}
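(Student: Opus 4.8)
The plan is to prove this classical statement (Serre's conjecture) by induction on $n$, following Quillen's strategy. The base case $n=1$ is immediate: $k^{[1]}=k[X_1]$ is a principal ideal domain, over which every finitely generated projective module is free. For the inductive step assume $n\geqslant 2$ and write $B=k^{[n]}=R[X_n]$ with $R=k[X_1,\dots,X_{n-1}]=k^{[n-1]}$; let $P$ be a finitely generated projective $B$-module. The goal is to show that $P$ is \emph{extended} from $R$, that is, $P\cong (P/X_nP)\otimes_R B$. Granting this, $P/X_nP$ is a finitely generated projective $R$-module, hence free by the induction hypothesis, and therefore $P$ itself is free over $B$.

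Toward the claim that $P$ is extended from $R$, I would first run a spreading-out argument followed by a change of variables. Localising at $S=R\setminus\{0\}$, the module $S^{-1}P$ is finitely generated projective over $S^{-1}B=\operatorname{Frac}(R)[X_n]$, a polynomial ring in one variable over a field and hence a PID; so $S^{-1}P$ is free, and since $P$ is finitely presented over $B$ there is $s\in R\setminus\{0\}$ with $P_s$ free over $B_s$. As $k$ has characteristic zero it is infinite, so after a Nagata-type change of variables $X_i\mapsto X_i+X_n^{e_i}$ for $1\leqslant i\leqslant n-1$ we may assume $s$, viewed in $B$, is monic in $X_n$; such a change of variables is an automorphism of $B$, so this costs nothing. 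Next I would invoke Quillen's patching theorem: a finitely presented $R[X_n]$-module is extended from $R$ as soon as $P_{\mathfrak m}$ is extended from $R_{\mathfrak m}$ for every maximal ideal $\mathfrak m$ of $R$. Fix such an $\mathfrak m$; over the local ring $R_{\mathfrak m}$, ``extended'' coincides with ``free'', the element $s$ is still monic in $X_n$ as an element of $R_{\mathfrak m}[X_n]$, and $(P_{\mathfrak m})_s$ is free over $R_{\mathfrak m}[X_n]_s$. Horrocks' theorem --- if $A$ is local, $f\in A[X]$ is monic, and a finitely generated projective $A[X]$-module becomes free after inverting $f$, then it was already free --- now yields that $P_{\mathfrak m}$ is free over $R_{\mathfrak m}[X_n]$. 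By Quillen patching, $P$ is extended from $R$, and the induction is complete.

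The genuinely substantial ingredients are the two theorems of Quillen (patching/local--global) and Horrocks; the remaining steps --- spreading out, the monic change of variables, and the passage from ``extended'' back to ``free'' via the inductive hypothesis --- are formal, so I expect the core difficulty to lie precisely in the proofs of those two results, which I would treat as black boxes. An alternative route, due to Suslin, would first establish that every finitely generated projective $k^{[n]}$-module is stably free (for instance from $K_0(k^{[n]})\cong K_0(k)$ by homotopy invariance of $K_0$ for regular rings) and then prove directly, by an induction on $n$ using Suslin's results on unimodular rows over polynomial rings, that stably free implies free; in that approach the main obstacle shifts to the cancellation step, which is exactly where the unimodular-row theory does the work.
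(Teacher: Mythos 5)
The paper does not prove this statement: it is quoted as the classical Quillen--Suslin theorem (the solution of Serre's conjecture) and used purely as a black box in \thref{ac}, so there is no internal proof to compare against. Your outline of Quillen's argument is a correct sketch: the induction on $n$, the spreading-out over $\operatorname{Frac}(R)[X_n]$ to produce $s$ with $P_s$ free, the monic reduction, and the combination of Quillen's patching theorem with the affine Horrocks theorem form exactly the standard skeleton, and you correctly identify those two results as the only genuinely deep ingredients. Two small remarks. First, the Nagata substitution $X_i\mapsto X_i+X_n^{e_i}$ with rapidly growing exponents makes any nonzero polynomial monic in $X_n$ up to a unit over an \emph{arbitrary} field, so the appeal to $k$ being infinite is not needed for that step (it is the linear substitution $X_i\mapsto X_i+c_iX_n$ that requires an infinite field). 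Second, after that substitution the subring over which you write $B=R[X_n]$ is the polynomial ring on the \emph{new} first $n-1$ variables, so the conclusion ``$P$ is extended from $R$'' refers to this new $R$; this is harmless, since it is again $k^{[n-1]}$ and the induction hypothesis applies, but it deserves an explicit sentence because $s$ itself lay in the old $R$ and is only monic in $X_n$ with respect to the new coordinates. Your alternative route via stable freeness and Suslin's cancellation for unimodular rows is also accurately described.
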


\subsection{Generators of the image ideals}

We first observe the following lemmas. The first one is a generalised version of Corollary 15 of \cite{Gfact}. 

\begin{lem}\thlabel{fc}
	Let $D$ be a locally nilpotent derivation on $B=k[X,Y,Z]$, $A=ker(D)$ and $\deg_{D}(Z)=n>0$. Consider the following surjective $A$-module morphism
	$$\pi : B \rightarrow \frac{B}{ZB}.$$ Suppose there exists a degree module $\mathscr{F}_{m}$ such that $m <n$ and $\pi(\mathscr{F}_{m})=k[X,Y]$. Then $B= \sum_{i \geqslant 0}^{} \mathscr{F}_{m}Z^i$.
\end{lem}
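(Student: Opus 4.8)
The plan is to show that every element of $B$ lies in $\sum_{i\geqslant 0}\mathscr{F}_m Z^i$ by induction on $\deg_D$. Write $C:=\sum_{i\geqslant 0}\mathscr{F}_m Z^i$; note $C$ is an $A$-submodule of $B$ containing $\mathscr{F}_m$, and since $Z$ is a local slice with $DZ\in A$, one checks $ZC\subseteq C$, so it suffices to prove $B\subseteq C$. Take $b\in B$ and argue by induction on $d:=\deg_D(b)$. If $d\leqslant m$, then $b\in\mathscr{F}_m\subseteq C$ and we are done. So suppose $d>m$.

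The key step is to peel off a ``$Z$-divisible'' piece. Since $\pi(\mathscr{F}_m)=k[X,Y]=\pi(B)$, the image $\pi(b)$ can be written as $\pi(b')$ for some $b'\in\mathscr{F}_m$; hence $b-b'\in ZB$, say $b-b'=Zc$ for some $c\in B$. The point is to bound $\deg_D(c)$: because $\deg_D$ is a degree function (\thref{def}(ii)), $\deg_D(Zc)=\deg_D(Z)+\deg_D(c)=n+\deg_D(c)$, while $\deg_D(b-b')\leqslant\max(\deg_D(b),\deg_D(b'))\leqslant\max(d,m)=d$. Therefore $\deg_D(c)\leqslant d-n<d$ (using $n>0$). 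By the induction hypothesis $c\in C$, so $Zc\in ZC\subseteq C$, and thus $b=b'+Zc\in C$. This closes the induction and gives $B=C=\sum_{i\geqslant 0}\mathscr{F}_m Z^i$.

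The main obstacle — and the only place requiring care — is the degree bookkeeping: one must make sure $\deg_D$ behaves additively on the product $Zc$ and subadditively on sums, which is exactly the content of $\deg_D$ being the degree function attached to $D$ (this is standard, cf.\ \thref{def}(ii) and the elementary properties of degree functions), and one must verify the base-of-induction well-definedness, i.e.\ that there really is a representative of $\pi(b)$ inside $\mathscr{F}_m$ rather than merely in $B$ — but this is precisely the hypothesis $\pi(\mathscr{F}_m)=k[X,Y]$. One should also record at the start that $ZC\subseteq C$: if $x=\sum_{i\geqslant 0}a_i z_i$ with $a_i\in A$, $z_i\in\mathscr{F}_m$ and we set $x=\sum a_i z_i Z^{0}$... more precisely, an element of $C$ is a finite sum $\sum_{i\geqslant 0} m_i Z^i$ with $m_i\in\mathscr{F}_m$, and multiplying by $Z$ shifts indices, landing again in $C$; combined with the induction this yields the claim. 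No deeper input (Quillen--Suslin, Newton polygons, etc.) is needed here — \thref{fc} is purely a filtration-theoretic statement, and the proof is a short induction on $\deg_D$.
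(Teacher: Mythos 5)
Your proof is correct and is essentially the paper's argument: both rest on the identity $B=\mathscr{F}_m+ZB$ (coming from $\pi(\mathscr{F}_m)=\pi(B)$) together with additivity of $\deg_D$ on products, the only difference being that the paper iterates the identity $t+1$ times and kills the $Z^{t+1}B$ tail by a single degree comparison, while you peel off one factor of $Z$ and induct on $\deg_D$. Two small remarks: your parenthetical justification of $ZC\subseteq C$ via ``$Z$ is a local slice'' is off ($DZ$ need not lie in $A$, since $\deg_D(Z)=n$ may exceed $1$), though as you note the inclusion is immediate from the index shift; and your bookkeeping never actually uses $m<n$ (only $n>0$), which is consistent since that hypothesis is really needed only for the directness of the sum via \thref{fl}.
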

\begin{proof}
	Since $\pi(B)=\pi(\mathscr{F}_{m})$, $B=\mathscr{F}_{m}+ZB$. That means,
	$$
	B= \sum_{i=0}^{r-1}\mathscr{F}_{m}Z^i+Z^rB,
	$$ for every $r \geqslant 1$.
	Therefore, for $N=\sum_{i \geqslant 0}^{} \mathscr{F}_{m}Z^i$, we get $B=N+Z^{r}B$ for all $r \geqslant 1$.\\Let $b\in B$ and $t=\deg_{D}(b)$. Since $B=N+Z^{t+1}B$, if $b \notin N$, then there exist 
	$a=\sum_{i=0}^{t}a_{i}Z^i \in N$ and 
	$b^{\prime} \in B$ such that $a_{i}\in \mathscr{F}_{m}$ for 
	$1 \leqslant i \leqslant t$, $b^{\prime} \neq 0$ and 
	$$
	b=a+ b^{\prime}Z^{t+1}.
	$$
	Now $\deg_{D}(a) \leqslant tn+m$ and $\deg_{D}(b^{\prime}Z^{t+1}) \geqslant n(t+1)$. Since $m <n$, we get $\deg_{D}(b)= \deg_{D}(b^{\prime}Z^{t+1}) \geqslant n(t+1)$. But this contradicts the assumption that $\deg_{D}(b)=t$. Therefore, $b \in N$. Hence, $B=N$.
\end{proof}

For the next lemma one may refer to \cite[pg. 5]{Dfree}. However, for the sake of completeness of this note we are giving a proof here.
\begin{lem}\thlabel{ac}
	Let $B=k[X,Y,Z]$ and $D \in \LND(B)$. 
	Let $\overline{k}$ be an algebraic closure of 
	$k$ and $\overline{D}=D \otimes_{k} \overline{k}$ 
	denote the extension of $D$ in $\LND(\overline{B})$, where $\overline{B}=\overline{k}[X,Y,Z]$. For $n\in \mathbb{N}$, suppose $I_n$ and $\overline{I_{n}}$ denote the $n$-th image ideal of $D$ and $\overline{D}$ respectively. If $\overline{I_n}$ is principal, then so is $I_n$.  
	
\end{lem}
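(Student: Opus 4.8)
To prove \thref{ac} I would argue by a descent / scalar-extension comparison between the image ideals $I_n \subseteq A := \ker(D)$ and $\overline{I_n} \subseteq \overline{A} := \ker(\overline{D})$. The first step is to record the base-change compatibility: since $\overline{B} = B \otimes_k \overline{k}$ is a faithfully flat extension of $B$, and $D$ is $k$-linear, one has $\overline{D}{}^n(\overline{B}) = D^n(B) \otimes_k \overline{k} = D^n(B)\overline{B}$; combined with $\ker(\overline{D}) = A \otimes_k \overline{k}$ (\thref{prop}(iv)) and the fact that contraction of $A\otimes_k\overline k$ along $A \hookrightarrow A\otimes_k \overline k$ is the identity, this yields $\overline{I_n} = I_n \otimes_k \overline{k} = I_n \overline{A}$. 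In particular $\overline{I_n}$ is the extension of $I_n$ under the faithfully flat ring map $A \to \overline{A}$.

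The second step is the key observation that $A$ here is not an arbitrary ring: by \thref{mthm} (Miyanishi) $A = k^{[2]}$ is a polynomial ring in two variables over $k$, hence a two-dimensional regular UFD. Now suppose $\overline{I_n}$ is principal, say $\overline{I_n} = (g)\overline{A}$ for some $g \in \overline{A}$. I want to conclude $I_n$ is principal. Since $A$ is a UFD it suffices to show $I_n$ is a height $\leqslant 1$ ideal that is locally principal, or — more cleanly — to show that the finitely generated $A$-module $I_n$ is projective of rank $1$, because a rank-one projective module over a polynomial ring $k^{[2]}$ is free by Quillen–Suslin (\thref{qs}), and a free ideal of rank one is principal. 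Faithfully flat descent of projectivity gives exactly this: $I_n \otimes_A \overline{A} = \overline{I_n}$ is a principal nonzero ideal of the domain $\overline{A}$, hence a free $\overline{A}$-module of rank $1$, in particular projective; since $A \to \overline{A}$ is faithfully flat, $I_n$ is a projective $A$-module (finite generation of $I_n$ comes from $A$ being Noetherian, as $A = k^{[2]}$), and its rank is the rank of $\overline{I_n}$, namely $1$.

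The third and final step is then immediate: $I_n$ is a finitely generated projective $A$-module of rank $1$ over $A = k^{[2]}$, so by Quillen–Suslin (\thref{qs}) it is free, say $I_n \cong A$ as an $A$-module; a free rank-one submodule of the domain $A$ is generated by a single element, i.e. $I_n$ is a principal ideal. This completes the proof.

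**Main obstacle.** The only genuinely delicate point is the first step — establishing $\overline{I_n} = I_n \overline{A}$ cleanly, i.e. that forming the $n$-th image ideal commutes with the scalar extension $k \to \overline{k}$. One must be slightly careful that $\overline{D}{}^n(\overline{B}) \cap \overline{A}$ equals $(D^n(B) \cap A)\otimes_k \overline{k}$ rather than something larger: this uses that $\overline{A} = A \otimes_k \overline{k}$ is flat over $A$ (so intersections are preserved under the extension, since $(D^nB + A \text{ inside } B)$ flatness arguments, or directly $\overline{D}{}^nB \cap \overline A = (D^nB)\overline B \cap A\overline B$ and flatness of $B \to \overline B$ over the subrings), together with the explicit $k$-vector-space direct-sum decompositions of $B$ and $\overline B$ coming from local slices. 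Once this identification is in hand, the descent-of-projectivity and Quillen–Suslin steps are routine.
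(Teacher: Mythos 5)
Your proposal is correct and follows essentially the same route as the paper: identify $\overline{I_n}$ with $I_n\otimes_A\overline{A}$, use faithfully flat descent to conclude $I_n$ is a finitely generated projective $A$-module, and then invoke Quillen--Suslin over $A=k^{[2]}$ to get freeness, hence principality. If anything, you are more careful than the paper about the base-change identification $\overline{I_n}=I_n\otimes_k\overline{k}$, which the paper simply asserts.
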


\begin{proof}
	Let $A=ker(D)=k^{[2]}$ and $\overline{A}=A \otimes_{k} \overline{k}=ker(\overline{D})=\overline{k}^{[2]}$. Now, $I_{n}=D^nB \cap A$ and $\overline{I_{n}}=\overline{D}^n \overline{B} \cap \overline{A}$, for every integer $n \geqslant 0$. 
	
Suppose that $\overline{I_{n}}=(\overline{a_{n}})$ for some $\overline{a_{n}} \in \overline{A}$. That means $\overline{I_{n}}$ is a free $\overline{A}$-module. Since $\overline{I_n}=I_n \otimes_{k} \overline{k}=I_n \otimes_{A} \overline{A}$, and $\overline{A}$ is a faithfully flat $A$-module, we have $I_n$ is a projective $A$-module. Hence by \thref{qs}, we have $I_n$ is free $A$-module, and hence a principal ideal.
\end{proof}

Let $D$ be a homogeneous triangularizable LND on $k^{[3]}$. Then, $D=a D^{\prime}$ for some $a \in ker(D)$ and an irreducible triangularizable LND $D^{\prime}$. For $n \in \mathbb{Z}$, if $I_n$ and $I_n^{\prime}$ denote the $n$-th image ideals of $D$ and $D^{\prime}$ respectively, then $I_n= a I_n^{\prime}$. Hence it is enough to find the generators of the image ideals of irreducible homogeneous triangularizable LNDs.  

The following theorem explicitly describes the image ideals of irreducible homogeneous triangularizable LNDs  on $k^{[3]}$. 

\begin{thm}\thlabel{trf}
	
	Let $B=k[U,V,W]$ and $D \in \LND(B)$ be irreducible homogeneous and triangularizable of degree $d (\geqslant 0)$. Let $A=ker(D)$. Then for $n \in \mathbb{N}$, we have $I_{n}=(X^{t(d+1)^2+r(d+1)})$ where  $t$ and $r$ are respectively the quotient and reminder of $n$ when divided by $d+2$, and $X$ is a linear variable of $B$ such that $X \in ker(D)$.
	
\end{thm}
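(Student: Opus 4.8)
By Lemma~\ref{htr} and Lemma~\ref{tr}, after a linear change of variables we may assume $B = k[X,Y,Z]$ with $DX = 0$, $DY = \mu X^{d+1}$ and $DZ = g(X,Y)$ for a homogeneous $g$ of degree $d+1$, and moreover $A = k[X,P]$ with $P = Y^{d+2} + Xf_{d+1}(X,Y) + \beta X^{d+1}Z$, $\beta \in k^{*}$, $\deg_{D}(Y) = 1$, $\deg_{D}(Z) = d+2$. The strategy is to pin down the degree modules $\mathscr{F}_{m}$ (Definition~\ref{def}(iii)) explicitly and then read off $D^{n}B \cap A$ from them. First I would record the basic computation $\deg_{D}(X) = 0$, $\deg_{D}(Y) = 1$, $\deg_{D}(Y^{j}) = j$ for $j \le d+1$, and $\deg_{D}(Z) = d+2$; since $\{1, Y, \dots, Y^{d+1}, Z\}$ have distinct $\deg_{D}$-values, $Y$ is a local slice with $DY = \mu X^{d+1} \in A$, and the key structural fact is that $B$ is a free $A[Y]$-module — indeed $B_{\mu X^{d+1}} = A_{\mu X^{d+1}}[Y]$ by Lemma~\ref{prop}(ii) — which lets us describe $\mathscr{F}_{m}$ for small $m$.

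The core step is to apply Lemma~\ref{fc} with the local slice playing the role of $Z$ there. Concretely: $\deg_{D}(Z) = d+2 =: n$, and I claim $\pi(\mathscr{F}_{d+1}) = k[X,Y]$ where $\pi : B \to B/ZB$. This holds because $\mathscr{F}_{d+1} \supseteq k[X] + k[X]Y + \dots + k[X]Y^{d+1} + k[X]\cdot(\text{terms})$ and modulo $Z$ the polynomial $Y^{d+2}$ becomes expressible — more carefully, one uses that $P \equiv Y^{d+2} + Xf_{d+1}(X,Y) \pmod{Z}$ lies in $A$ and hence in $\mathscr{F}_{0}$, so $Y^{d+2} \in \pi(\mathscr{F}_{d+2})$... here I must be careful about which degree module is needed. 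The cleaner route: since $\deg_{D}(Y) = 1$ and $Y^{d+1}$ has $\deg_{D} = d+1 < d+2$, and any monomial $X^{a}Y^{b}$ with $b \le d+1$ lies in $\mathscr{F}_{d+1}$, the image $\pi(\mathscr{F}_{d+1})$ contains all of $k[X,Y]$ once we observe $Y^{d+2} \equiv -\beta^{-1}X^{-1}\big(\text{stuff}\big)$... I would instead argue directly that $B = \sum_{i\ge 0}\mathscr{F}_{d+1}Z^{i}$ using that $\{1,Y,\dots,Y^{d+1},Z\}$ generate $B$ over $A[X]$ suitably, or invoke Lemma~\ref{fl} with $M = \sum_{j=0}^{d+1}A Y^{j}$ (a free $A$-module with $D$-basis $\{1,Y,\dots,Y^{d+1}\}$, since these have distinct $\deg_{D}$) and $h$ a suitable local slice of higher degree, to get $\mathscr{F}_{n}$ as an explicit direct sum. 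Then $D^{n}$ applied to a $D$-basis element $X^{a}Y^{b}Z^{c}$ sends it into $A$ precisely when $n = \deg_{D}(X^{a}Y^{b}Z^{c}) = b + c(d+2)$, and $D^{n}(Y^{b}Z^{c})$ is (up to a nonzero constant) $(DY)^{b}(\dots) = $ a monomial $X^{(d+1)b + (d+1)c(d+2) \cdot (\text{correction})}$ in $X$ times a unit in $A$; counting the total power of $X$ produced gives the exponent $t(d+1)^{2} + r(d+1)$ after writing $n = t(d+2) + r$ with $0 \le r \le d+1$.

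More precisely, the computation I expect to carry out is: for the local slice $T$ of $\deg_{D}$-value $d+2$ one has $DT \in X^{d+1}A$... wait — in the triangularizable case the relevant local slice of top $\deg_{D}$-value is $Z$ itself or $Y$; the cleanest is that $D^{n}$ maps $\mathscr{F}_{n}$ onto $I_{n}$ and, using the $D$-basis of $\mathscr{F}_{n}$ produced by iterating Lemma~\ref{fl}, the generator of $I_{n}$ is $D^{n}$ of the unique $D$-basis element of $\deg_{D}$-value exactly $n$. Writing that basis element as $Y^{r}\cdot w^{t}$ where $w$ is a local slice with $\deg_{D}(w) = d+2$ and $Dw \in X^{?}A$, one computes $D^{r}(Y^{r}) = r!\,\mu^{r}X^{r(d+1)}$ and, inductively, $D^{d+2}(w) \in X^{(d+1)^{2}}\cdot(\text{unit in }A)$ — this last identity, that applying $D$ a full $(d+2)$ times to the degree-$(d+2)$ local slice multiplies by exactly $X^{(d+1)^{2}}$ up to a unit of $A$, is the heart of the matter and where the exponent $(d+1)^{2}$ comes from. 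Hence $I_{n} = D^{n}B\cap A = (X^{t(d+1)^{2} + r(d+1)})$.

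\medskip
\noindent\textbf{Main obstacle.} The delicate point is establishing the precise identity $D^{d+2}(w) = (\text{unit})\cdot X^{(d+1)^{2}}$ for the degree-$(d+2)$ local slice $w$, equivalently identifying the generator of $I_{d+2}$, and then propagating it multiplicatively via the direct-sum decomposition $\mathscr{F}_{n} = \bigoplus \mathscr{F}_{d+1}\,w^{i}$ from Lemma~\ref{fl} so that the image ideal for general $n = t(d+2)+r$ factors as $I_{t(d+2)}\cdot$(correction from the $Y^{r}$-part). One must also verify the hypothesis $\pi(\mathscr{F}_{d+1}) = k[X,Y]$ of Lemma~\ref{fc} (or the analogous freeness hypothesis of Lemma~\ref{fl}), which requires knowing $\mathscr{F}_{d+1}$ contains no elements involving $Z$ beyond what $P$ forces — this follows from $\deg_{D}(Z) = d+2 > d+1$ together with factorial closedness of $A$, but needs to be stated carefully.
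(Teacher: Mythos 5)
Your overall route is the paper's: reduce to the normal form of \thref{tr}, use \thref{fc} and \thref{fl} to exhibit $B$ as a free $A$-module with $D$-basis $\{Y^iZ^j \mid 0\leqslant i\leqslant d+1,\ j\geqslant 0\}$, and then compute $D^n$ of the unique basis element of $\deg_D$-value $n=t(d+2)+r$, namely $Y^rZ^t$. The final exponent count is also essentially right, and easier than you fear: $DZ=\gamma P_Y=\gamma\left((d+2)Y^{d+1}+X(f_{d+1})_Y\right)$ lies in $k[X,Y]$, on which $D$ acts triangularly with $DY=-\gamma\beta X^{d+1}$, so $D^{d+2}Z$ is a nonzero constant times $X^{(d+1)^2}$ by a one-line Leibniz computation; the paper even bypasses this by noting that $D^n(Y^rZ^t)$ is a power of $X$ which is homogeneous of standard degree $nd+t+r=t(d+1)^2+r(d+1)$. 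So the identity you flag as the ``main obstacle'' is not where the difficulty lies.

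The genuine gap is in the middle, exactly where you start hedging, and it is twofold. First, the hypothesis $\pi(\mathscr{F}_{d+1})=k[X,Y]$ of \thref{fc}: your attempts (monomials $X^aY^b$ with $b\leqslant d+1$, or solving for $Y^{d+2}$ at the cost of $X^{-1}$) do not reach all of $k[X,Y]$. The correct observation is that $\pi(\mathscr{F}_{d+1})$ is a $\pi(A)$-module, that $\pi(A)=k[X,\,Y^{d+2}+Xf_{d+1}(X,Y)]$, and that $k[X,Y]$ is a free $\pi(A)$-module on the basis $\{1,Y,\ldots,Y^{d+1}\}\subset\pi(\mathscr{F}_{d+1})$; it is multiplication by $\pi(P)$, not division by $X$, that produces $Y^{d+2}$. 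Second, and more seriously: $B=\bigoplus_{i}\mathscr{F}_{d+1}Z^i$ does not yet yield a $D$-basis of $B$; one also needs the equality $\mathscr{F}_{d+1}=\bigoplus_{i=0}^{d+1}AY^i$, and \thref{fl} only shows that the right-hand side is a free $A$-module on a $D$-set. The equality comes from Freudenburg's criterion \thref{ft}, whose hypothesis $(DY)B\cap M_0=(DY)M_0$ (with $M_0=\bigoplus_{i=0}^{d+1}AY^i$ and $DY=-\gamma\beta X^{d+1}$) the paper verifies by reducing mod $X$, where $\pi^{\prime}(A)=k[Y^{d+2}]$ and $\pi^{\prime}(M_0)=\bigoplus_{i=0}^{d+1}k[Y^{d+2}]Y^i$ is visibly free. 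Your substitute justification --- that $\mathscr{F}_{d+1}$ ``contains no elements involving $Z$'' because $\deg_D(Z)=d+2>d+1$ plus factorial closedness --- is not a proof and is not even literally true as stated, since $P\in A=\mathscr{F}_0\subseteq\mathscr{F}_{d+1}$ involves $Z$. Without \thref{ft} (or an equivalent argument) the key conclusion $I_n=\left(D^n(Y^rZ^t)\right)$ is unjustified.
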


\begin{proof}
	Since $D$ is irreducible and triangularizable, 
	by \thref{tr}, we get a system of variables $\{X,Y,Z\}$ which are linear in $\{U,V,W\}$ such that $D=\gamma \Delta_{(X,P)}$ where 
	$$
	P=Y^{d+2}+Xf_{d+1}(X,Y)+ \beta X^{d+1}Z,
	$$
	 $\gamma ,\beta \in k^{*}$, $f_{d+1}(X,Y)$ is homogeneous polynomial of degree $d+1$ and  $\deg_{D}(Y)=1$, $\deg_{D}(Z)=d+2$. Now $A=ker(D)=k[X,P]$. 
	 Consider the following surjective $A$-algebra homomorphism 
	$$
	\pi : B \rightarrow \frac{B}{ZB}.
	$$
	Now $\frac{B}{ZB} = k[X,Y]$ and $\pi(A)=k[X, Y^{d+2}+Xf_{d+1}(X,Y)]$. Clearly $k[X,Y]$ is free $\pi(A)$-module with a basis 
	$$
	 \mathscr{A}= \{1,Y,Y^2,\ldots,Y^{d+1}\}.
	 $$
	  As $\deg_D(Y)=1$, $\mathscr{A} \subset \pi(\mathscr{F}_{d+1}) $.
	Hence we get $\pi(\mathscr{F}_{d+1})=k[X,Y]$ as $\pi(A)$-module. Now by \thref{fc}, we obtain that
	$$
	B= \sum_{i \geqslant 0}^{} \mathscr{F}_{d+1}Z^{i}.
	$$
	Since $\deg_{D}(Z)=d+2$, by \thref{fl}(a), we get the above sum is direct sum. That is 
	$$
	B= \bigoplus_{i \geqslant 0} \mathscr{F}_{d+1} Z^{i}.
	$$
	Now if we show $\mathscr{F}_{d+1}$ is a free $A$-module having a $D$-basis, then applying \thref{fl}(b), we get a $D$-basis for $B$.
	
	 Consider the $A$-submodule $M_{0}=\bigoplus_{i=0}^{d+1} A Y^{i}$ of $\mathscr{F}_{d+1}$. Note that $DY= -\gamma \beta X^{d+1}$. Now consider the surjection $$
	\pi^{\prime}: B \rightarrow \frac{B}{XB}.
	$$ 
	Since $\pi^{\prime}(A)=k[Y^{d+2}]$, $\pi^{\prime}(M_{0})=\bigoplus_{i=0}^{d+1}$ $\pi^{\prime}(A)Y^{i} $ is a free $\pi^{\prime}(A)$-module. Therefore, $XB \cap M_{0} = XM_{0}$ and hence $(X^{d+1})B \cap M_{0}= (X^{d+1})M_{0}$.
	
	 Now applying \thref{ft}, we get that $\mathscr{F}_{d+1}=M_{0}$, which is a free $A$ module with $D$-basis $\{1, Y,\ldots, Y^{d+1}\}$. Therefore, we obtain that $B$ is free $A$-module with a $D$-basis $\{Y^{i}Z^{j} \mid 0 \leqslant i \leqslant d+1, j \geqslant 0\}$.

	 Let $n$ be a positive integer such that $n=t(d+2)+r$, for some $t \geqslant 0$ and $0 \leqslant r \leqslant d+1$. Now $\deg_{D}(Y^rZ^t)=r+t(d+2)$. Therefore, from the $D$-basis it is clear that $I_{n}=\left(D^n(Y^rZ^t)\right)$. Note that $D$ is a homogeneous LND of degree $d$, $DY= -\gamma\beta X^{d+1}$ and $DZ= \gamma P_Y$. Therefore, $D^{d+2}Z= \lambda X^{d(d+2)+1}$ for some $\lambda \in k^{*}$ and hence $D^n(Y^rZ^t)$ is a constant multiple of a power of $X$. Now using the homogeneous degree of $D$, we have $I_{n}=(X^{nd+t+r})=(X^{t(d+1)^2+r(d+1)})$.
\end{proof}

We now describe the generators of the image ideals of irreducible homogeneous non-triangularizable LNDs of rank $2$ and degree $pq-2$ on $k^{[3]}$ where $p,q$ are prime numbers, not necessarily distinct.

\begin{thm}\thlabel{ntrf}
	Let $p,q$ be prime numbers, not necessarily distinct and $D$ be an irreducible non-triangularizable homogeneous locally nilpotent derivation of rank $2$ and degree $pq-2$ on $B=k[U,V,W]$ and $A=ker(D)$. Let $X$ be a linear variable of $B$ such that $X \in ker(D)$. Then the image ideals have either of the following forms:
	
	\begin{itemize}
		\item [\rm(a)] For every $n \in \mathbb{N}$, $I_{n}=(X^{n(pq-2)+qr+s+t})$, where $r$ is the reminder  of $n$ modulo $p$, and if $t^{\prime}$ is the quotient of $n$ when divided by $p$, then $t$ and $s$ are the quotient and reminder of $t^{\prime}$ when divided by $q$.
		
		\item[\rm(b)] For every $n \in \mathbb{N}$, $I_{n}=(X^{n(pq-2)+pr+s+t})$, where $r$ is the reminder  of $n$ modulo $q$, and if $t^{\prime}$ is the quotient of $n$ when divided by $q$, then $t$ and $s$ are the quotient and reminder of $t^{\prime}$ when divided by $p$.
	\end{itemize}
	

%
\end{thm}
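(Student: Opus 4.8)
The plan is first to reduce to an algebraically closed ground field, then to exhibit an explicit $D$-basis of $B$ over $A$ built from a local slice, and finally to compute the single generator of each image ideal determined by that basis.

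Since the image ideals of a rank $2$ LND on $k^{[3]}$ are principal and $A=k[X,P]$ has $X$ as a coordinate, a routine descent argument (using \thref{ac} together with faithful flatness of $\overline{k}$ over $k$) reduces us to the case $k=\overline{k}$: if $\overline{I_n}=(X^{N})$ over $\overline{k}$, then $I_n=(X^{N})$ over $k$. So assume $k$ is algebraically closed and apply \thref{ntr}: we obtain coordinates $\{X,Y,Z\}$, linear in $\{U,V,W\}$, with $D=\gamma\Delta_{(X,P)}$, $P=T^{p}+c_{1}X^{q}T^{p-1}+\dots+c_{p}X^{pq-1}Y$, $T=h(X,Y)+X^{q-1}Z$, $c_{p}\neq0$, $\deg_{D}Y=p$, $\deg_{D}Z=pq$, and $T$ a local slice with $DT=\gamma c_{p}X^{pq+q-2}$. (This is the form behind conclusion (a); the other form in \thref{ntr} is obtained by interchanging $p$ and $q$ and leads to (b), so it is enough to treat (a).) Here $T$ is homogeneous of degree $q$ and $P$ of degree $pq$, so $A=k[X,P]$ is a graded subring of $B$ with $\deg X=1$, $\deg P=pq$.

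Next I would show that $\mathcal{B}=\{\,T^{r}Y^{s}Z^{t}\mid 0\leqslant r\leqslant p-1,\ 0\leqslant s\leqslant q-1,\ t\geqslant0\,\}$ is a $D$-basis of $B$ over $A$. For $A$-independence, localize at $X$: since $X\in A$ and $DT$ is a unit of $A_X$, \thref{prop}(ii) gives $B_X=A_X[T]=\bigoplus_{m\geqslant0}A_XT^{m}$; from the displayed form of $P$ one solves $Y$ as a polynomial in $T$ over $A_X$ of $T$-degree $p$ with unit leading coefficient, so $h(X,Y)$ has $T$-degree $pq$ and $Z=X^{-(q-1)}(T-h(X,Y))$ has $T$-degree $pq$ with unit leading coefficient; hence the elements of $\mathcal{B}$ have pairwise distinct $T$-degrees $r+sp+tpq$ (a mixed-radix expansion exhausting $\mathbb{N}$) with unit leading coefficients, so $\mathcal{B}$ is an $A_X$-basis of $B_X$ and in particular an $A$-independent subset of $B$. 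Let $M=\bigoplus_{e\in\mathcal{B}}Ae\subseteq B$; it is a graded $A$-submodule with Hilbert series $H_A(u)\sum_{e\in\mathcal{B}}u^{\deg e}=\frac{1}{(1-u)(1-u^{pq})}\cdot\frac{1-u^{pq}}{1-u^{q}}\cdot\frac{1-u^{q}}{1-u}\cdot\frac{1}{1-u}=\frac{1}{(1-u)^{3}}=H_B(u)$, using $\deg(T^{r}Y^{s}Z^{t})=qr+s+t$; since $M\subseteq B$ and the two agree in each finite-dimensional graded piece, $M=B$. As $\deg_{D}$ is additive and takes the distinct values $r+sp+tpq$ on $\mathcal{B}$, the set $\mathcal{B}$ is a $D$-set, hence a $D$-basis. (One could instead extract this from \thref{fc}, \thref{fl} and \thref{ft}, as in the proof of \thref{trf}.) Consequently $\mathscr{F}_{n}=\bigoplus_{\deg_{D}e\leqslant n}Ae$, and writing $n=r+ps+pqt$ with $0\leqslant r<p$, $0\leqslant s<q$, $t\geqslant0$ and $e_{n}:=T^{r}Y^{s}Z^{t}$, one deduces $I_{n}=D^{n}B\cap A=(D^{n}e_{n})$ (the standard property of a $D$-basis: $D^{n}$ is injective on $\bigoplus_{\deg_{D}e\geqslant n}Ae$, $e_{n}$ is the unique basis element whose $D^{n}$-image lies in $A$, and that image is annihilated by $D$).

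It remains to compute $D^{n}e_{n}$. Since $\deg_{D}T^{r}=r$, $\deg_{D}Y^{s}=sp$ and $\deg_{D}Z^{t}=tpq$ add up to $n$, the multinomial Leibniz expansion collapses to $D^{n}(T^{r}Y^{s}Z^{t})=\binom{n}{r,\,sp,\,tpq}D^{r}(T^{r})\,D^{sp}(Y^{s})\,D^{tpq}(Z^{t})$. Here $D^{r}(T^{r})=r!\,(DT)^{r}$ is a nonzero scalar times a power of $X$; iterating $D$ on $DY=-\gamma X^{q-1}(\partial P/\partial T)$ shows $D^{p}Y$ is a nonzero scalar times $X^{1+p(pq-2)}$, so $D^{sp}(Y^{s})=\tfrac{(sp)!}{(p!)^{s}}(D^{p}Y)^{s}$ is a nonzero scalar times a power of $X$; and iterating $D$ on the identity $X^{q-1}Z=T-h(X,Y)$ gives $X^{q-1}D^{pq}Z=-D^{pq-1}\!\big(h_{Y}(X,Y)\cdot DY\big)$, whose Leibniz expansion again collapses, forcing $D^{pq}Z$ to be a nonzero scalar times $X^{1+pq(pq-2)}$ and hence $D^{tpq}(Z^{t})=\tfrac{(tpq)!}{((pq)!)^{t}}(D^{pq}Z)^{t}$ a nonzero scalar times a power of $X$. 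Multiplying the three exponents, $D^{n}e_{n}$ is a nonzero scalar times $X^{N}$ with $N=r(pq+q-2)+s\big(1+p(pq-2)\big)+t\big(1+pq(pq-2)\big)=(pq-2)(r+sp+tpq)+qr+s+t=n(pq-2)+qr+s+t$, giving $I_{n}=(X^{\,n(pq-2)+qr+s+t})$, which is (a); case (b) follows by the symmetry $p\leftrightarrow q$. The crux is this last step — in particular the verification that $D^{pq}Z$ is a \emph{pure} power of $X$, which is precisely where the non-triangular local slice identity $X^{q-1}Z=T-h(X,Y)$ is used.
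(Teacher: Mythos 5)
Your proposal is correct, and its overall architecture matches the paper's proof: reduce to $\overline{k}$ via \thref{irr}, \thref{rk1} and \thref{ac}; invoke \thref{ntr} to obtain the local slice $T=h(X,Y)+X^{q-1}Z$ with $DT=\gamma c_pX^{pq+q-2}$, $\deg_D Y=p$, $\deg_D Z=pq$; exhibit $\{T^rY^sZ^t\mid 0\leqslant r\leqslant p-1,\ 0\leqslant s\leqslant q-1,\ t\geqslant 0\}$ as a $D$-basis of $B$ over $A$; and then read off $I_n=(D^n(T^rY^sZ^t))$ and evaluate it by the collapsing Leibniz expansion together with homogeneity. The one step where you take a genuinely different route is the proof that this set is an $A$-basis. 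The paper runs Freudenburg's machinery: it reduces modulo $Z$ and applies \thref{fc} and \thref{fl} to get $B=\bigoplus_{l\geqslant 0}\mathscr{F}_{pq-1}Z^l$, then identifies $\mathscr{F}_{pq-1}$ with $N_0=\bigoplus AY^iT^j$ by checking the condition $(DT)B\cap N_0=(DT)N_0$ of \thref{ft} modulo $X$. You instead obtain $A$-independence from $B_X=A_X[T]$ (\thref{prop}(ii)), comparing the $T$-degrees $r+sp+tpq$ with unit leading coefficients, and obtain generation from a Hilbert-series count using that $A=k[X,P]$ is a graded polynomial subring with $\deg P=pq$; this is a clean, self-contained alternative that avoids \thref{ft} entirely (though it leans on the grading, so it would not transfer to non-homogeneous situations the way Freudenburg's criterion does). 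The endgame is identical to the paper's, and your explicit verification that $D^pY$ and $D^{pq}Z$ are pure powers of $X$ --- via $DY=-\gamma X^{q-1}\,\partial P/\partial T$ and the identity $X^{q-1}Z=T-h(X,Y)$ --- usefully fills in a computation that the paper dismisses as ``clear''; the resulting exponent $n(pq-2)+qr+s+t$ agrees with the statement.
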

\begin{proof}
	
	Let $\overline{k}, \overline{B}, \overline{D}, \overline{I_{n}}$ be the same as in \thref{ac}.
	 As $D$ is irreducible, so is $\overline{D}$ (cf. \thref{irr}). Also $\deg(D)=\deg(\overline{D})$. As $D$ is non-triangularizable,
	 by \thref{rk1},
	  we get $\overline{D}$ is also non-triangularizable. 
	   By \thref{ac}, $\overline{I_{n}}$ is principal if and only if $I_n$ is so. Now, since $\overline{I_n}=I_n \otimes_{k} \overline{k}$, if we assume $\overline{I_n}$ is principal, then the generator of $\overline{I_{n}}$ is same as the generator of $I_n$ upto multiplication by a non-zero constant in $\overline{k}$.
	  Therefore, it is enough to assume that $k$ is algebraically closed, $\overline{B}=B$ and $\overline{D}=D$.
	
	
	Now, by \thref{ntr} there exists a system of variables $\{X,Y,Z\}$ of $B$ which are linear in $\{U,V,W\}$ such that 
	$D=\gamma \Delta_{(X,P)}$ for some $\gamma \in k^{*}$, and $P$ takes either of the following forms:
	
	\begin{equation}\label{p}
			P=T^{p}+ c_{1}X^{q}T^{p-1}+\cdots+c_iX^{iq}T^{p-i}+\dots+
			c_{p-1}X^{pq-q}T+  
		c_{p}X^{pq-1}Y,
	\end{equation}
where $T=h(X,Y)+  X^{q-1}Z$, $h(X,Y)$ is a homogeneous polynomial of degree $q$ and monic in $Y$, $c_{i} \in k$, $c_{p} \neq 0$ and $\deg_{D}(Y)=p, \deg_{D}(Z)=pq$. Or, 

\begin{equation}\label{q1}
	P=T_1^{q}+ \widetilde{c}_{1}X^{p}T_1^{q-1}+\cdots+\widetilde{c}_iX^{ip}T_1^{q-i}+\dots+
	\widetilde{c}_{q-1}X^{pq-p}T_1+  
	\widetilde{c}_{q}X^{pq-1}Y,
\end{equation}
where $T_1=h_1(X,Y)+  X^{p-1}Z$, $h_1(X,Y)$ is a homogeneous polynomial of degree $p$ and monic in $Y$, $\widetilde{c}_{i} \in k$, $\widetilde{c}_{q} \neq 0$ and $\deg_{D}(Y)=q, \deg_{D}(Z)=pq$.
Therefore, without loss of generality we take $P$ as in \eqref{p} and proceed.

	Let $A=ker(D)$. 
	Now, as in \thref{ntr}, $T$ is a local slice of $D$ such that $DT=\gamma c_{p}X^{pq+q-2}$.  
	Consider the surjective $A$-algebra homomorphism 
	$$
	\pi: B \rightarrow \frac{B}{ZB}.
	$$
	Note that $\pi(P)=Y^{pq}+a_{1}XY^{pq-1}+\cdots+a_{pq-1}X^{pq-1}Y$, where $a_{i} \in k$, for $1 \leqslant i \leqslant pq-1$.
	Now $\frac{B}{ZB}=k[X,Y]$ and $\pi(A)= k[X,Y^{pq}+a_{1}XY^{pq-1}+\cdots+a_{pq-1}X^{pq-1}Y]$. Therefore, $k[X,Y]$ is a free $\pi(A)$-module with basis
	$$
	\mathscr{B}=\{1,Y,\ldots,Y^{pq-1}\}.
	$$
	Now $S :=\{1,Y,\ldots,Y^{q-1}, T, TY,\ldots,TY^{q-1},\ldots, T^{p-1}, T^{p-1}Y,\ldots,T^{p-1}Y^{q-1}\} \subset \mathscr{F}_{pq-1}$. Since $\pi(S) \subset \pi(\mathscr{F}_{pq-1})$ and $\pi(T)$ is monic in $Y$ of degree $q$, it follows that $\mathscr{B} \subset \pi(\mathscr{F}_{pq-1})$.
	Hence we obtain that $\pi(\mathscr{F}_{pq-1})=k[X,Y]$ as $\pi(A)$-modules. Therefore, by \thref{fc} we get 
	$
	B=\sum_{i \geqslant 0} \mathscr{F}_{pq-1}Z^i.
	$
	Since $\deg_{D}(Z)=pq$, by \thref{fl}(a) we get 
	$$
	B=\bigoplus_{i \geqslant 0}\mathscr{F}_{pq-1}Z^i.
	$$
	We now show that $\mathscr{F}_{pq-1}$ is free $A$-module with a $D$-basis. Take  the $A$-submodule of $\mathscr{F}_{pq-1} $ as follows:
	$$
	N_{0}=\bigoplus_{\substack{0 \leqslant i \leqslant q-1 \\
			0 \leqslant j \leqslant p-1}} AY^{i}T^{j}.
	$$
	Note that from \eqref{p}, $$
	T^{p}=P-c_{1}X^{q}T^{p-1}-\cdots-c_iX^{iq}T^{p-i}-\dots-
	c_{p-1}X^{pq-q}T-  
	c_{p}X^{pq-1}Y.
	$$
	Therefore, $N:= \sum_{i=0}^{pq-1} A T^i \subseteq N_0$.
	Now consider the surjection 
	$$
	\pi^{\prime}: B \rightarrow \frac{B}{XB}.
	$$ 
	$\pi^{\prime}(A)=k[Y^{pq}]$ and $\pi^{\prime}(N_{0})=\bigoplus_{i=0}^{pq-1} \pi^{\prime}(A)Y^i$ is a free $\pi^{\prime}(A)$-module. Therefore,
	 $XB \cap~N_{0}=XN_{0}$ and hence $(X^{pq+q-2})B \cap N_{0}= (X^{pq+q-2})N_{0}$ i.e., $(DT) B \cap N_{0}= (DT)N_{0}$. 
	By \thref{ft}, $\mathscr{F}_{pq-1}=N_{0}$ is a free $A$-module with $D$-basis 
	$$
	\{Y^{i}T^{j} \mid 0 \leqslant i \leqslant q-1, 0 \leqslant j \leqslant p-1 \}.
	$$ 
	Hence $B$ is a free $A$-module with $D$-basis
	$$
	\mathscr{B}_{1}=\{Y^iT^jZ^l \,\, \mid 0 \leqslant i \leqslant q-1, 0 \leqslant j \leqslant p-1  , l \geqslant 0\}.
	$$
	
	\medskip
	 Let $n=t^{\prime}p+r$ for some $t^{\prime} \geqslant 0$ and $0 \leqslant r \leqslant p-1$, and $(t^{\prime},q)$ we have $t^{\prime}=tq+s$ 
	 for some $t \geqslant 0$ and $0 \leqslant s \leqslant q-1$. Therefore, $n=tpq+sp+r$. 
	 Now $\deg_D(Y^sT^rZ^t)=sp+r+tpq=n$. Therefore,
	 from the structure of the $D$-basis it is clear that $I_{n}=\left( D^n(Y^sT^rZ^t) \right)$. As $DT=\gamma c_{p}X^{pq+q-2}$, $DY=-\gamma P_Z$ and $DZ=\gamma P_Y$, from \eqref{p}, it is clear that $D^pY$ and $D^{pq}Z$ are constant multiples of some powers of $X$. Now, since $\deg(D)=pq-2$ and $T$ is a polynomial of degree $q$, we get $I_{n}=(X^{n(pq-2)+qr+s+t})$ which is same as the structure in (a).
	 
	 If the structure of $P$ is as in \eqref{q1}, proceeding similarly we get the generators of the image ideals as in (b).
\end{proof}

\section*{Acknowledgement}

The author is thankful to Prosenjit Das and Neena Gupta for carefully going through the earlier draft and suggesting several improvements. 
The author acknowledges the Council of Scientific and Industrial Research (CSIR) for the Shyamaprasad Mukherjee Fellowship (File No. SPM-07/0093(0295)/2019-EMR-I).


\begin{thebibliography}{XXX}
	
	
	
	

	  
	   \bibitem{kah} A. Ben. Khaddah, M. El. Kahoui and M. Ouali, \textit{The freeness property for locally nilpotent derivations of $R^{[2]}$}, Transformation Groups (2022), https://doi.org/10.1007/s00031-022-09705-z.
	  
	  \bibitem{cra} A. J. Crachiola, {\it The hypersurface $x + x^2y + z^2 + t^3 = 0$ over a field of arbitrary characteristic},
	  Proc. Amer. Math. Soc. {\bf{134}} (2005), 1289--1298.
	  
	  \bibitem{dtr96} D. Daigle, {\it A necessary and sufficient condition for triangulability of derivations of k[X,Y,Z]}, J. Pure Appl. Algebra { \bf 113} (1996), 297--305.
	  
	 
	  
	  
	  
	  
	    \bibitem{daig} D. Daigle, {\it On some properties of locally nilpotent derivations}, J. Pure Appl. Algebra { \bf 114} (1997),
	  221--230.
	  
	  	\bibitem{df} D. Daigle and G. Freudenburg, {\it Locally nilpotent derivations over a UFD and an application to rank two locally nilpotent derivations of $k[X_1,\ldots,X_n]$}, J. Algebra {\bf 204} (1998), 353--371.
	  
	  \bibitem{dtr09} D. Daigle, {\it Triangular derivations of k[X,Y,Z]}, J. of Pure and Applied Algebra {\bf 214} (2010), 1173--1180.
	  
	  \bibitem{dtame} D. Daigle, {\it Tame and wild degree functions}, Osaka J. of Math. {\bf 49} (2012), 53--80.
	  
	   \bibitem{Dfree} D. Daigle, \textit{The freeness property for locally nilpotent derivations of $k[X,Y,Z]$},  arXiv:2009.14800v2 [math.AC].
	  
	  
	  	\bibitem{dom} H. Derksen, O. Hadas and L. Makar-Limanov, {\it Newton polytopes of invariants of additive group
	  	actions, J. Pure Appl. Algebra}, {\bf{156}} (2001), 187--197.
  	
  	\bibitem{GF} G. Freudenburg, {\it Actions of $\mathbb{G}_{a}$ on $\mathbb{A}^3$ defined by homogeneous derivations}, J. Pure Appl. Algebra { \bf 126}
 	(1998), 169--181.
  	
  
	  
	
	\bibitem{GFB} G.Freudenburg, \textit{Algebraic Theory of Locally Nilpotent Derivations}, SpringerVerlag GmbH Germany (2017).
	
	  
	  \bibitem{Gfact} G. Freudenburg, \textit{Canonical factorization of the quotient morphism  for an affine $\mathbb{G}_{a}$-variety}, Transformation Groups, {\bf 24(2)}  (2019) 355--377.
	  
	 
	
	
	
	
\bibitem{mat} H. Matsumura, 
{\it Commutative Algebra}, Second edition, 
The Benjamin/ Cummings Publishing Company, 1980.

\bibitem{matr} H. Matsumura, 
{\it Commutative ring theory},
Cambridge University Press, Cambridge, second edition, 1989.
	
	\bibitem{miya} M.Miyanishi,\textit{ Normal affine subalgebras of a polynomial ring}, Algebraic and Topological Theories—
	to the memory of Dr. Takehiko Miyata (Kinosaki, 1984), Kinokuniya, Tokyo 1986, 37--51.
	
	
	
	
	\bibitem{ren} R. Rentschler, \textit{Op\'erations du groupe additif sur le plan affine,} C. R. Acad. Sc. Paris {\bf{267}} (1968), 384--387.
	
	

   \bibitem{twang} Z. Wang, \textit{ Locally Nilpotent Derivations of Polynomial Rings}, Ph.D thesis, Univ. Ottawa, 1999.
	 
	 \bibitem{zur}  V.D. Zurkowski,\textit{ Locally finite derivations}, Rocky Mount. J. Math, to appear.
	  
\end{thebibliography}
\end{document}